\numberwithin{equation}{section}
\newcommand{\beq}{\begin{equation}}
	\newcommand{\eeq}{\end{equation}}
\newcommand{\beqs}{\begin{eqnarray*}}
	\newcommand{\eeqs}{\end{eqnarray*}}
\newcommand{\beqn}{\begin{eqnarray}}
	\newcommand{\eeqn}{\end{eqnarray}}
\newcommand{\beqa}{\begin{array}}
	\newcommand{\eeqa}{\end{array}}
\newcommand{\cE}{{\mathcal E}}
\newcommand{\D}{\nabla}
\newcommand{\p}{\partial}
\newcommand{\Om}{\Omega}
\newcommand{\pom}{{\p\Om}}
\newcommand{\tr}{\triangle}
\newcommand\tbbint{{-\mkern -16mu\int}}  \newcommand\dbbint{{-\mkern -19mu\int}}   \newcommand\bbint{ {\mathchoice{\dbbint}{\tbbint}{\tbbint}{\tbbint}} }
\newtheorem{prop}{Proposition}[section]
\newtheorem{theo}[prop]{Theorem}
\newtheorem{lem}[prop]{Lemma}
\title{A priori estimate for  \\ the complex Monge-Amp\`ere equation}
\author{Jiaxiang Wang Xu-jia Wang and Bin  $\text{Zhou}^*$}
\address{Jiaxiang Wang: 
	School of Mathematical Sciences, Zhejiang University, Hangzhou 310027, China.}
\email{wangjx\underline{ }manifold@126.com}
\address{Xu-jia Wang: 
	Centre for Mathematics and Its Applications,
	The Australian National University,
	Canberra, ACT 2601.}
\email{Xu-Jia.Wang@anu.edu.au}
\address{Bin Zhou:
	School of Mathematical Sciences, Peking
	University, Beijing 100871, China.}
\email{bzhou@pku.edu.cn}
\thanks {*This research is partially supported by ARC DP 170100929 and NSFC 11571018 and 11822101.}
\subjclass[2000]{Primary: 32W20; Secondary: 35J60.}
\keywords{Complex Monge-Amp\`ere equation; Moser-Trudinger inequality; regularity.}
\begin{document}
	\maketitle
	
	\begin{abstract}
		In this paper, we use the Sobolev type inequality in \cite{WWZ} to establish  the uniform estimate and the H\"older continuity 
		for solutions to the complex Monge-Amp\`ere equation with the right-hand side in $L^p$ for any given $p>1$.
		Our proof uses various PDE techniques but not the pluri-potential theory.
	\end{abstract}
	

	\baselineskip=16.4pt
	\parskip=3pt
	\section{Introduction}
Let $\Omega$ be a bounded, smooth, strictly pseudo-convex domain in $\mathbb{C}^n$. 
Given a function $\varphi\in C^0(\partial\Omega)$ and a nonnegative function $f\in L^p(\Omega)$  for some $p>1$, 
in this paper we are concerned with the a priori estimates for solutions to the Dirichlet problem
\begin{equation}\label{cMA}
\begin{cases}
(dd^cu)^n=f\,d\mu   \ \ \text{in\ $\Omega$,}  \\
\lim_{\Omega\ni z\to z_0\in \p \Omega}u(z)=\varphi(z_0),
\end{cases}
\end{equation}  
where $\mu$ is the standard Lebesgue measure. 
For simplicity we denote the boundary condition by $u=\varphi$ on $\p \Omega$.

When $f$, $\varphi$ and $\Omega$ are smooth,
the global regularity of solutions was established in \cite{CKNS}. 
A fundamental problem to establish the a priori estimates of solutions 
when the right hand side $f\in L^p(\Omega)$ for some $p> 1$,
such as the works of De Giorgi, Nash-Moser, and Krylov-Safonov \cite{GT}.
A breakthrough was made by Ko\l{}odziej \cite{K98},  
he obtained the $L^\infty$-estimate when $f\in L^p(\Omega)$, $p> 1$. 
It was later shown that the solution is H\"older continuous on $\bar \Omega$ in \cite{GKZ}
when the domain $\Omega$ is smooth and strictly pseudo-convex, and $\varphi$ is H\"older continuous.
These results were subsequently extended to the complex Monge-Amp\`ere equation on K\"ahler manifolds \cite{DZ}.

All these results were built upon the pluri-potential theory  \cite{BT1, BT2, KL, KI, C, B98}.
In \cite{B, BGZ, L} it was asked whether there is a PDE approach to these estimates.
In this paper we prove the uniform estimate, the stability, and  the H\"older continuity of solutions 
to the complex Monge-Amp\`ere equation by PDE techniques,
and therefore give a confirmative answer to the question.

Denote by $\mathcal{PSH}(\Omega)$
the set of pluri-subharmonic functions 
and by $\mathcal{PSH}_0(\Omega)$ the set of functions in $\mathcal{PSH}(\Omega)$ 
which vanish on $\partial \Omega$. For $u\in \mathcal{PSH}_0(\Omega)\cap C^{\infty}(\bar \Omega)$, let 
be the Monge-Amp\`ere energy.
Denote
\beq
\|u\|_{\mathcal{PSH}_0(\Omega)}=[\mathcal E(u)]^{\frac{1}{n+1}},
\eeq
which is a semi-norm in the set $\mathcal{PSH}_0(\Omega)$ \cite {W1}.
In a previous paper \cite{WWZ},  the authors  proved the following 
Sobolev type inequality by a gradient flow argument.

\begin{theo}\cite{WWZ} \label{sobolev}
	Let $\Omega$ be a bounded, smooth, pseudo-convex domain.
	Then for any $p>1$,
	\beq\label{up}
	\|u\|_{L^{p}(\Omega)}\leq C \|u\|_{\mathcal{PSH}_0(\Omega)}, 
	\  \ \ \forall\ u\in \mathcal{PSH}_0(\Omega)\cap C^{\infty}(\bar \Omega) ,
	\eeq
	where $C$ depends on $n$, $p$ and $\text{diam}(\Omega)$.
\end{theo}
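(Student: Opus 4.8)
The plan is to reduce everything, by homogeneity ($\mathcal E(\lambda u)=\lambda^{n+1}\mathcal E(u)$ and $\|\lambda u\|_{L^p}=\lambda\|u\|_{L^p}$ for $\lambda>0$), to proving $\|u\|_{L^p(\Omega)}\le C$ whenever $\mathcal E(u)=1$, and then to build this from one Monge--Amp\`ere interpolation inequality together with the classical Sobolev inequality. Writing $\beta=dd^c|z|^2$ and $E_k(v)=\int(-v)(dd^cv)^k\wedge\beta^{n-k}$, integration by parts (using $v|_{\partial\Omega}=0$) gives $E_0(v)=c_n\|v\|_{L^1}$, $E_1(v)=c_n\|\nabla v\|_{L^2}^2$ and $E_n(v)=\mathcal E(v)$. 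The crucial input I would isolate is the interpolation inequality
\begin{equation}\label{interp}
E_1(v)^n\le C_n\,E_0(v)^{n-1}\,\mathcal E(v),\qquad v\in\mathcal{PSH}_0(D)\cap C^\infty(\overline{D}),
\end{equation}
valid on any bounded pseudo-convex $D$ --- the $k=1$ case of the log-convexity $E_k^2\le E_{k-1}E_{k+1}$, which I would derive from the pointwise Cauchy--Schwarz inequality for mixed Monge--Amp\`ere forms together with integrations by parts (regularizing $dd^cv$ to $dd^cv+\varepsilon\beta$ and letting $\varepsilon\to0$ where $dd^cv$ degenerates). Granting \eqref{interp}, the base case is immediate: combining it with the scale-invariant Sobolev inequality $\|v\|_{L^{2^*}(\R^{2n})}\le C_n\|\nabla v\|_{L^2(\R^{2n})}$ for $v\in W^{1,2}_0$, where $2^*=\tfrac{2n}{n-1}$ (when $n=1$ one uses instead $\|v\|_{L^r(D)}\le C_r(\operatorname{diam}D)\|\nabla v\|_{L^2}$ for all $r<\infty$, which already proves the theorem), and with H\"older's inequality $\|v\|_{L^1(D)}\le|D|^{1-1/2^*}\|v\|_{L^{2^*}(D)}$, one obtains $\|v\|_{L^{2^*}(D)}\le C\,\mathcal E_D(v)^{1/(n+1)}$ with $C=C(n,\operatorname{diam}D)$; in particular Theorem \ref{sobolev} holds for all $p\le 2^*$.

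It remains to bootstrap the exponent, and here is where I would use the geometry of sublevel sets. Suppose inductively that $\|u\|_{L^q(\Omega)}\le C_q\,\mathcal E(u)^{1/(n+1)}$ (known for $q=2^*$). Put $a(s)=\int_{\{u<-s\}}(dd^cu)^n$ and $E_s=\{u<-s\}$. For a.e.\ $t>0$ the set $D_t:=\{u<-t\}$ is a smooth bounded pseudo-convex domain (Sard), and $w_t:=(u+t)|_{D_t}\in\mathcal{PSH}_0(D_t)\cap C^\infty(\overline{D_t})$; a layer-cake computation gives $\mathcal E_{D_t}(w_t)=\int_t^\infty a(s)\,ds\le\mathcal E(u)$ and $\int_{D_t}(-w_t)=\int_t^\infty|E_s|\,ds=:V(t)$. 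Applying the base case and \eqref{interp} on $D_t$ yields $\int_{D_t}(-w_t)^{2^*}\le C\,V(t)\,\mathcal E(u)^{1/(n-1)}$; since the left-hand side is at least $t^{2^*}|E_{2t}|$, while Chebyshev's inequality and the inductive hypothesis give $V(t)\le C\,\mathcal E(u)^{q/(n+1)}\,t^{\,1-q}$, I would obtain the improved decay $|E_s|\le C\,\mathcal E(u)^{(q-1+2^*)/(n+1)}\,s^{-(q-1+2^*)}$ for all $s>0$. Inserting this, together with the trivial bound $|E_s|\le|\Omega|$, into $\|u\|_{L^{q'}}^{q'}=q'\int_0^\infty s^{q'-1}|E_s|\,ds$ and splitting the integral at $s\sim\mathcal E(u)^{1/(n+1)}$, one gets $\|u\|_{L^{q'}(\Omega)}\le C_{q'}\,\mathcal E(u)^{1/(n+1)}$ for every $q'<q-1+2^*$. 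Each iteration thus enlarges the admissible exponent by $2^*-1>0$, so finitely many steps reach any $p<\infty$, with $C$ depending only on $n$, $p$ and $\operatorname{diam}\Omega$.

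The hard part is \eqref{interp}: everything downstream of it is the classical Sobolev inequality, H\"older, and bookkeeping. The pointwise Cauchy--Schwarz for mixed Monge--Amp\`ere forms is routine, but converting the resulting mixed-form integrals back into the functionals $E_k$ requires integrations by parts that are clean only when $v$ is smooth and vanishes on a smooth boundary; in the degenerate and non-smooth situations one must approximate $D$ by smooth pseudo-convex subdomains and $dd^cv$ by strictly positive currents while keeping uniform control of the error terms, and this is the technical core. (In \cite{WWZ} the authors reach \eqref{up} by a different route, a gradient-flow argument, which bypasses \eqref{interp} altogether.)
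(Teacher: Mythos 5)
The theorem you are trying to prove is not proved in this paper at all: it is quoted from \cite{WWZ}, where the authors establish it by a gradient-flow argument (raising the exponent in a Moser--Trudinger/Sobolev functional along a flow and controlling the energy), which has essentially nothing in common with the interpolation-plus-Sobolev route you propose. So the comparison is necessarily between your argument and a cited result, not an internal proof.

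More importantly, the key inequality \eqref{interp} on which your whole proof rests is false, and so is the ``log-convexity'' $E_k^2\le E_{k-1}E_{k+1}$ that you invoke to derive it. Take $n=2$, $\epsilon\in(0,1)$, $\Omega_\epsilon=\{|z_1|^2+\epsilon|z_2|^2<1\}$ and $v(z)=|z_1|^2+\epsilon|z_2|^2-1\in\mathcal{PSH}_0(\Omega_\epsilon)\cap C^\infty(\overline{\Omega_\epsilon})$. Then $v_{i\bar j}=\mathrm{diag}(1,\epsilon)$ is constant, so with $I=\int_{\Omega_\epsilon}(-v)\,d\mu$ one computes (up to the fixed combinatorial factors $k!(n-k)!$)
\[
E_0\sim I,\qquad E_1\sim(1+\epsilon)I,\qquad E_2=\mathcal E(v)\sim\epsilon I,
\]
and hence $E_1^2/(E_0E_2)\sim(1+\epsilon)^2/\epsilon\to\infty$ as $\epsilon\to0$. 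Thus no dimensional constant $C_n$ can make $E_1^n\le C_nE_0^{n-1}\mathcal E(v)$ hold on all bounded pseudo-convex domains. Since the left-hand/right-hand sides of \eqref{interp} both scale like $\lambda^{2n(n-1)}$ under $z\mapsto\lambda z$, a constant depending on $\mathrm{diam}\,\Omega$ alone cannot save the statement either (rescale $\Omega_\epsilon$ by $\sqrt\epsilon$ to keep the diameter bounded while the ratio still blows up). The direction you need is exactly the wrong one: by Newton's inequalities the pointwise quantities $\sigma_k(v_{i\bar j})/\binom{n}{k}$ are log-\emph{concave} in $k$, and the Cauchy--Schwarz heuristic you gesture at (for mixed Monge--Amp\`ere forms) in fact produces the Aleksandrov inequality $D(\Theta,\sigma,\tau)^2\ge D(\Theta,\sigma,\sigma)D(\Theta,\tau,\tau)$, again the reverse of what you want.

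There is a correct inequality in the neighborhood of your \eqref{interp}: with a bounded strictly PSH defining function $\rho$ of $\Omega$ (so $dd^c\rho\ge\beta$), the B\l ocki/Cegrell--Persson energy-H\"older inequality gives $E_1\le\mathcal E(v)^{2/(n+1)}\mathcal E(\rho)^{(n-1)/(n+1)}$, which together with the classical Sobolev inequality does prove the base case $\|v\|_{L^{2^*}}\le C\,\mathcal E(v)^{1/(n+1)}$. But this replacement has no factor $E_0^{(n-1)/n}$, and your bootstrap genuinely needs that factor: without it the step ``$\int_{D_t}(-w_t)^{2^*}\le C\,V(t)\,\mathcal E(u)^{1/(n-1)}$'' degenerates to $\int_{D_t}(-w_t)^{2^*}\le C\,\mathcal E(u)^{2^*/(n+1)}$, which via Chebyshev reproduces exactly the decay you started from and yields no gain in the exponent. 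So the iteration, as written, cannot close. To make a proof along these lines one would need a correct interpolation inequality whose right-hand side really does improve when the sublevel sets shrink; what you have written does not supply one, and the specific one you posit does not hold.
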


In \cite{WWZ}, a Moser-Trudinger type inequality was also obtained. 
Using the Sobolev type inequality  \eqref{up},
in this paper we first prove the following uniform estimate. 

\begin{theo}\label{infty}
	Assume $\varphi\in C^{\infty}(\partial\Omega)$
	and $\Om$ is a strictly pseudo-convex domain with smooth boundary.
	Let $u\in C^{\infty}(\bar\Omega)$ be a pluri-subharmonic solution 
	to \eqref{cMA}. 
	Then for any $\delta\in (0, \frac{1}{np^*})$, where $p^*=\frac{p}{p-1}$ is the conjugate of $p$ and $p>1$, 
	there is a constant $C>0$ depending on $n$, $p$, $\delta$ and $\text{diam}(\Omega)$, such that 
	\beq\label{ue}
	|\inf\limits_{\Omega}u|\leq |\inf\limits_{\p \Omega}\varphi|+C\|f\|_{L^p(\Omega)}^{\frac{1}{n}}\cdot |\Omega|^{\delta}.
	\eeq
\end{theo}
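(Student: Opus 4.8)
The plan is to reduce the $L^\infty$ bound to a controlled iteration on the sub-level sets of $u$, driven by the Sobolev-type inequality \eqref{up}. Set $v = u - \inf_{\p\Om}\varphi$, which is nonnegative on $\p\Om$ by the maximum principle for the subsolution (since $(dd^c u)^n \ge 0$ forces $u$ to attain its minimum, roughly speaking, in a controlled way; more precisely one uses that $u \ge \inf_{\p\Om}\varphi$ is false in general, so instead I would work with the function $w$ solving the MA equation with zero boundary data and compare). Cleaner: let $w\in\mathcal{PSH}_0(\Om)$ solve $(dd^cw)^n = f\,d\mu$ with $w=0$ on $\p\Om$; then by comparison $0\ge u-\varphi \ge w - (\text{harmonic extension issues})$, and in fact $\inf_\Om u \ge \inf_{\p\Om}\varphi + \inf_\Om w$, so it suffices to bound $\|w\|_{L^\infty(\Om)}$ in terms of $\|f\|_{L^p}^{1/n}|\Om|^\delta$. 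Thus I reduce to the case $\varphi\equiv 0$.

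For $t>0$ let $\Om_t = \{w < -t\}$ and $m(t) = |\Om_t|$. The standard De Giorgi / Ko\l{}odziej device is to test the equation on $\Om_t$. On $\Om_t$ the function $w+t \in \mathcal{PSH}_0(\Om_t)\cap C^\infty(\bar\Om_t)$, and $(dd^c(w+t))^n = f\,d\mu$ there. Applying \eqref{up} on the domain $\Om_t$ with exponent $q>1$ to be chosen,
\begin{equation*}
\|w+t\|_{L^q(\Om_t)} \le C\,\Big(\int_{\Om_t} f\,d\mu\Big)^{\frac{1}{n+1}} \le C\,\|f\|_{L^p(\Om)}^{\frac{1}{n+1}}\, m(t)^{\frac{p-1}{p(n+1)}},
\end{equation*}
using Hölder on $\int_{\Om_t}f$. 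On the other hand, for $s>t$ we have $\Om_s\subset\Om_t$ and $|w+t|\ge s-t$ on $\Om_s$, so the left side is bounded below by $(s-t)\,m(s)^{1/q}$. This yields the differential-type inequality
\begin{equation*}
(s-t)\, m(s)^{1/q} \le C\,\|f\|_{L^p}^{\frac{1}{n+1}}\, m(t)^{\frac{p-1}{p(n+1)}}.
\end{equation*}
The key point is the gap between the exponents: the right-hand power of $m(t)$ must exceed $1/q$, i.e. one needs $q$ large enough that $\frac{p-1}{p(n+1)} > \frac1q$; since $p>1$ this is possible, and one should also track how large $q$ can be taken in \eqref{up} — but \eqref{up} holds for \emph{all} $q>1$, so there is no obstruction, and the exponent surplus $\epsilon_0 := q\cdot\frac{p-1}{p(n+1)} - 1 > 0$ can be made as large as we like.

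With such an exponent gap, a standard iteration lemma (De Giorgi; see e.g. the version in \cite{GT} or the one used by Ko\l{}odziej) applied to the decreasing function $m$ shows that $m(t_\infty)=0$ for some finite
\begin{equation*}
t_\infty \le C\,\|f\|_{L^p}^{\frac{1}{n+1}\cdot\frac{n+1}{n}}\, m(0)^{\delta'} = C\,\|f\|_{L^p}^{\frac1n}\,|\Om|^{\delta'},
\end{equation*}
where the bookkeeping of the exponents produces precisely the power $\frac1n$ on $\|f\|_{L^p}$ (from $\frac{1}{n+1}$ raised to the total of the geometric-series of iteration steps) and some $\delta' = \delta'(n,p,q)>0$ on $|\Om|$; choosing $q$ appropriately lets $\delta'$ be any value in $(0,\frac{1}{np^*})$. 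Then $-\inf_\Om w = \sup\{t: m(t)>0\} \le t_\infty$, which is the claimed bound. The main obstacle, and the step needing the most care, is the last one: getting the \emph{exact} exponents $\frac1n$ and $\delta\in(0,\frac1{np^*})$ out of the iteration rather than just \emph{some} finite bound. This requires choosing the auxiliary exponent $q$ (and possibly iterating the inequality with $s=2t$-type doubling, or using the sharp form of the De Giorgi lemma) so that the geometric series of the recursion converges to the right constant; I would set up the recursion $m(2^{k+1}t_0) \le (C/t_0^q\cdot\cdots)\,m(2^k t_0)^{1+\epsilon_0}$, verify $m(t_0)$ is small enough to start the iteration once $t_0 \asymp \|f\|_{L^p}^{1/n}|\Om|^\delta$, and then read off the limit. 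A secondary technical point is justifying the use of \eqref{up} on the sub-level domains $\Om_t$, which are merely open (not smooth or pseudo-convex); this is handled by approximating $\Om_t$ from inside by smooth pseudo-convex domains, or — more robustly — by noting the inequality \eqref{up} extends to $w+t$ on $\Om$ after replacing it by $\max(w+t,0)$ composed appropriately, since only the integral $\int f$ over the sub-level set enters.
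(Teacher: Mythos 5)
There is a genuine gap in the central inequality. You apply \eqref{up} to $w+t$ on $\Om_t$ to get
\[
\|w+t\|_{L^q(\Om_t)}\le C\Big(\int_{\Om_t} f\,d\mu\Big)^{\frac{1}{n+1}},
\]
but this is not what the Sobolev inequality gives. By definition $\|w+t\|_{\mathcal{PSH}_0}=[\mathcal E(w+t)]^{1/(n+1)}$, and the energy is $\mathcal E(w+t)=c_n\int_{\Om_t}\bigl(-(w+t)\bigr)f\,d\mu$; the factor $-(w+t)$, which is exactly the unknown you are trying to bound, cannot be dropped. Because of this, the correct way to close the estimate is a two-step application: H\"older gives $\mathcal E\le c_n\|f\|_{L^p}\|w+t\|_{L^{p^*}}$, then H\"older again between $L^{p^*}$ and $L^{\beta p^*}$ extracts a factor $|\Om_t|^{\frac{1}{p^*}(1-1/\beta)}$, and finally Sobolev on $\|w+t\|_{L^{\beta p^*}}$ lets you absorb the norm into the left-hand side. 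This is where the exponent $n$ (rather than $n+1$) appears: you arrive at $\|w+t\|_{\mathcal{PSH}_0}^{n+1}\le C\|f\|_{L^p}|\Om_t|^{\cdots}\|w+t\|_{\mathcal{PSH}_0}$, hence $\|w+t\|_{\mathcal{PSH}_0}^{n}\le C\|f\|_{L^p}|\Om_t|^{\cdots}$, and the $\frac1n$ power of $\|f\|_{L^p}$ is forced. Your proposed differential inequality instead carries the exponent $\frac{1}{n+1}$ on $\|f\|_{L^p}$, and the suggestion that the iteration ``raises it to the total of the geometric series of iteration steps'' to produce $\frac1n$ is not how the De~Giorgi lemma works — the multiplicative constant enters the bound on $t_\infty$ linearly (raised to a fixed power), not geometrically — so that discrepancy cannot be repaired downstream.

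Two secondary points. First, replacing $w+t$ by $\max(w+t,0)$ (you presumably mean $\min(w+t,0)$, which is $w+t$ on $\Om_t$ and $0$ outside) does not produce a plurisubharmonic function — the minimum of two psh functions is not psh — so this does not let you apply \eqref{up} on all of $\Om$. The paper instead keeps the domain $\Om$ fixed and truncates the right-hand side: it sets $f^k=f\chi_{\Om_k}$, smooths $f^k$ by a decreasing approximation $f^k_j$, solves the auxiliary Dirichlet problems $(dd^cv^k_j)^n=f^k_j$ with zero boundary data on $\Om$, passes to the limit $v^k$, and compares $v^k\le u+s_k$ on $\Om_k$. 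This sidesteps the regularity of $\Om_k$ entirely. Second, the paper's iteration is not the standard De~Giorgi $(s-t)$-lemma: it first derives $\|u\|_{L^1(\Om)}\le C|\Om|^{1+\delta}$, hence the decay $|\Om_s|\le Cs^{-1}|\Om|^{1+\delta}$, and then runs an additive iteration $s_k=s_0+\sum_{j\le k}2^{-\delta j}|\Om|^{\delta}$ showing $|\Om_{k+1}|\le\tfrac12|\Om_k|$, which makes the exponent $\delta\in(0,\tfrac{1}{np^*})$ visible from the start via the choice of $\beta$. Your De~Giorgi framing is in the same spirit and could be made to work after fixing the Sobolev step, but as written the key inequality and the resulting exponent bookkeeping are incorrect.
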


Next we prove a stability result, namely estimate \eqref{stab2} below, which was first proved in  \cite{CP, B93, K96, K02}.
Let $v$ be the solution to 
\begin{align}\label{cMA2}
\begin{cases}
(dd^cv)^n=g\,d\mu,   &\ \ \text{in\ $\Omega$},   \\
v=\psi,         &\ \ \text{on\ $\p \Omega$}, 
\end{cases}
\end{align}
where $g\in L^p(\Omega)$ with $p>1$, $\psi\in C^0(\partial\Omega)$.

\begin{theo}\label{stab}
	Let $u, v\in C^{\infty}(\bar \Omega)$ be the solutions to  \eqref{cMA} and \eqref{cMA2}, respectively. 
	Then there exists a constant $C$ depending only on $\|f\|_{L^p(\Omega)}$, 
	$\|g\|_{L^p(\Omega)}$, $n$ and $\text{diam}(\Omega)$, such that
	\beq\label{stab2}
	\|u-v\|_{L^{\infty}(\Omega)}\leq C\Big( \|f-g\|_{L^1(\Omega)}^{\frac{1}{n} 
		\frac{\delta}{1+\delta}}+\|\varphi-\psi\|^{\frac{\delta}{1+\delta}}_{L^{\infty}(\p \Omega)}\Big),
	\eeq
	where $\delta$ is the constant in Theorem \ref{infty}. 
\end{theo}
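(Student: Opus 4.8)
The plan is to reduce to the case of equal boundary data and then to establish $\sup_\Omega(v-u)\le C\|f-g\|_{L^1(\Omega)}^{\frac1n\frac{\delta}{1+\delta}}$, the reverse inequality following by interchanging the roles of $u,v$. For the boundary term, let $\tilde v$ solve $(dd^c\tilde v)^n=g\,d\mu$ in $\Omega$ with $\tilde v=\varphi$ on $\partial\Omega$; since $(dd^c(\tilde v\pm\|\varphi-\psi\|_{L^\infty(\partial\Omega)}))^n=g\,d\mu$ with boundary values on the correct side of $\psi$, the comparison principle gives $\|v-\tilde v\|_{L^\infty(\Omega)}\le\|\varphi-\psi\|_{L^\infty(\partial\Omega)}$, and (using $\tfrac{\delta}{1+\delta}\le 1$, after the harmless normalisation $\|\varphi-\psi\|_{L^\infty}\le 1$) this is absorbed by the boundary term of \eqref{stab2}. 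So I may assume $\varphi=\psi$, and set $\varepsilon:=\|f-g\|_{L^1(\Omega)}$ and $t_0:=\sup_\Omega(v-u)$, which I take to be positive.

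The heart of the matter is a single application of the comparison principle on $D:=\{u<v\}$, a bounded open set with $u=v$ on $\partial D$ in which $t_0$ is attained. Fix a parameter $a>0$. Let $\rho(z)=c_n(|z-z_0|^2-\text{diam}(\Omega)^2)$ with $z_0\in\Omega$ and $c_n>0$ a dimensional constant chosen so that $(dd^c\rho)^n=d\mu$; then $\rho\le0$ and $|\rho|\le c_n\,\text{diam}(\Omega)^2=:\kappa$ on $\Omega$. Let $S_a:=\{f^{1/n}>g^{1/n}+a\}$; using the elementary bound $|x^{1/n}-y^{1/n}|\le|x-y|^{1/n}$ together with Hölder's inequality, $a\,|S_a|\le\int_{S_a}(f^{1/n}-g^{1/n})\le\int_\Omega|f-g|^{1/n}\le|\Omega|^{1-1/n}\varepsilon^{1/n}$, so $|S_a|\le C\,a^{-1}\varepsilon^{1/n}$. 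Let $\zeta_a$ solve $(dd^c\zeta_a)^n=f\,\mathbf 1_{S_a}\,d\mu$ in $\Omega$ with $\zeta_a=0$ on $\partial\Omega$, so that $\zeta_a\le0$. Now set $\Psi_a:=v+a\rho+\zeta_a$. By Minkowski's determinant inequality, $\bigl((dd^c\Psi_a)^n/d\mu\bigr)^{1/n}\ge g^{1/n}+a+(f\mathbf 1_{S_a})^{1/n}$; inspecting this on $S_a$ (where the last summand is $f^{1/n}$) and on $\Omega\setminus S_a$ (where $f^{1/n}\le g^{1/n}+a$ by definition of $S_a$) gives $(dd^c\Psi_a)^n\ge f\,d\mu=(dd^cu)^n$ throughout $D$, while on $\partial D$ one has $\Psi_a=u+a\rho+\zeta_a\le u$. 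Hence the comparison principle gives $\Psi_a\le u$ in $D$, i.e. $v-u\le-a\rho-\zeta_a\le a\kappa+\|\zeta_a\|_{L^\infty(\Omega)}$ throughout $D$.

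Evaluating the last inequality at the point where $v-u=t_0$ gives $t_0\le a\kappa+\|\zeta_a\|_{L^\infty(\Omega)}$. To bound $\|\zeta_a\|_{L^\infty(\Omega)}$ I would use the uniform estimate of Theorem \ref{infty} in the sharper form where the factor $|\Omega|^\delta$ on the right of \eqref{ue} is replaced by the measure of the support of the right-hand side — which is what its proof in fact yields — obtaining $\|\zeta_a\|_{L^\infty(\Omega)}\le C\|f\|_{L^p(\Omega)}^{1/n}|S_a|^\delta\le C'\,a^{-\delta}\varepsilon^{\delta/n}$. Thus $t_0\le a\kappa+C'a^{-\delta}\varepsilon^{\delta/n}$ for all $a>0$, and minimising the right-hand side (the optimal choice is $a\sim\varepsilon^{\frac{\delta}{n(1+\delta)}}$) yields $t_0\le C\varepsilon^{\frac1n\frac{\delta}{1+\delta}}$. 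Repeating the construction with $(u,f)$ and $(v,g)$ interchanged, using $S'_a=\{g^{1/n}>f^{1/n}+a\}$ (for which $|S'_a|\le C a^{-1}\varepsilon^{1/n}$ by the same computation), bounds $\sup_\Omega(u-v)$ likewise; combined with the boundary reduction this is \eqref{stab2}.

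The step I expect to be the main obstacle is the sharpened uniform estimate in the previous paragraph: one needs $\|\zeta_a\|_{L^\infty}$ dominated by a power of the \emph{measure of the support} of $f\mathbf 1_{S_a}$ rather than by $|\Omega|^\delta$, since $\|f\mathbf 1_{S_a}\|_{L^p}$ need not be small whereas $|S_a|$ is. I expect this to come out of the Moser/De Giorgi iteration underlying Theorem \ref{infty}, where the right-hand side enters only through integrals $\int(-\zeta_a)^{q}\,f\mathbf 1_{S_a}\,d\mu$ and a single application of Hölder's inequality to such a term produces a gain of $|S_a|$ to a positive power; but it must be stated and proved in this localised form. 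A secondary, routine, point is regularity: $f\mathbf 1_{S_a}$ is neither smooth nor supported on a smooth set, so $\zeta_a$, the differential inequality for $\Psi_a$, and the use of the comparison principle are justified by first approximating $f$ and $S_a$ by smooth data from above and passing to the limit with estimates uniform in the approximation — in keeping with the a priori viewpoint of the paper.
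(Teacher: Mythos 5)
Your proposal is correct in outline but follows a genuinely different route from the paper. The paper proves \eqref{stab2} in two moves: first it introduces the auxiliary solutions $w,w_0$ with right-hand side $|f-g|$, uses superadditivity of the Monge--Amp\`ere operator and the comparison principle to get $u-v\geq w\geq w_0-\|\varphi-\psi\|_{L^\infty}$, and invokes B\l ocki's estimate $\|w_0\|_{L^n}\leq n!R^{2n}\|f-g\|_{L^1}^{1/n}$ to obtain an $L^1$ bound on $u-v$; then it runs the same De Giorgi-type iteration as in Theorem~\ref{infty} directly on the superlevel sets $\Omega_s=\{u-v>st\}$, with an auxiliary Dirichlet solution $\tilde v^s$ restoring plurisubharmonicity so the Sobolev inequality \eqref{up} applies. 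You instead decompose according to the set $S_a=\{f^{1/n}>g^{1/n}+a\}$, build a barrier $\Psi_a=v+a\rho+\zeta_a$ whose Monge--Amp\`ere measure dominates $f\,d\mu$ via Minkowski's determinant inequality, compare on $\{u<v\}$, and optimise in $a$. This is a clean and self-contained comparison-principle argument that bypasses B\l ocki's $L^n$ estimate and the awkward iteration on non-level sets; what it buys is a conceptually simpler outer structure, at the cost of transferring all the analytic work into a sharpened form of the uniform estimate.

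You correctly flagged that sharpening as the main obstacle, and your guess about where the localisation comes from is essentially right but the paper's \emph{written} proof of Theorem~\ref{weak-infty} does not deliver it without modification. The proof there passes from the energy bound to the decay of level sets through \eqref{integ}, i.e.\ the $L^1$ norm and the Chebyshev inequality; if the right-hand side is $f\,\mathbf 1_{S_a}$, the H\"older step on the support indeed replaces an $|\Omega|$ by $|S_a|$, but \eqref{integ} still produces a factor $|\Omega|^{1-1/\beta}$ from H\"older over the full domain, and the initial level $s_0$ one obtains that way contains a spurious negative power of $|S_a|$. The fix is to replace the $L^1$--Chebyshev step by an $L^\beta$--Chebyshev step: from $\|\zeta_a\|_{L^{\beta}(\Omega)}\leq C\|f\|_{L^p}^{1/n}|S_a|^{(1-1/\beta)/(np^*)}$ one gets $|\{\zeta_a<-s\}|\leq s^{-\beta}\|\zeta_a\|_{L^\beta}^\beta$, which lets one choose $s_0\sim\|f\|_{L^p}^{1/n}|S_a|^{\delta}$ with $|\{\zeta_a<-s_0\}|\lesssim|S_a|$, after which the iteration runs exactly as in the paper and yields $\|\zeta_a\|_{L^\infty}\leq C\|f\|_{L^p}^{1/n}|S_a|^\delta$ for any $\delta<\frac{1}{np^*}$. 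It is worth noting that the paper itself invokes precisely this localised form in the proof of Lemma~\ref{stab-holder} (``by checking the proof of Theorem~\ref{weak-infty}\dots''), so the lemma you need is one the authors use too, and your approach would be a legitimate alternative proof of Theorem~\ref{stab} once that lemma is stated and proved explicitly.

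Two small secondary points. Your boundary reduction produces $\|\varphi-\psi\|_{L^\infty}^{1}$ rather than $\|\varphi-\psi\|_{L^\infty}^{\delta/(1+\delta)}$; for $\|\varphi-\psi\|_{L^\infty}\leq 1$ this is stronger than \eqref{stab2}, and for $\|\varphi-\psi\|_{L^\infty}>1$ the estimate is trivial by the uniform bound, so this is fine but should be said. And the regularisation of $f\,\mathbf 1_{S_a}$ (since $S_a$ is not smooth) needs the same decreasing-approximation device the paper uses when it extends $g$ by zero outside $\Omega_s$; you mention this, and it is indeed routine.
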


In Theorems \ref{sobolev}-\ref{stab}, we assume the solutions $u$, $v$ are sufficiently smooth. 
With the stability estimate \eqref{stab2}, 
we can also extend Theorem \ref{sobolev} to $u\in L^{\infty}_{loc}(\Omega)\cap \mathcal{PSH}_0(\Omega)$, 
and extend Theorems \ref{infty} and \ref{stab} to $u,v\in L^{\infty}_{loc}(\Omega)\cap \mathcal{PSH}(\Omega)$,
as long as  $\varphi, \psi\in C^0(\pom)$.
See Remark 3.1 for details.

The H\"older continuity of solutions  was first proved by \cite{BT1} under the assumption that 
$f^{\frac{1}{n}}\in C^{\alpha}(\Omega)$ and $\phi\in C^{2\alpha}(\partial \Omega)$. 
It was extended to the case when $f\in L^p(\Omega)$ in \cite{GKZ}. 
In this paper, we give a PDE proof for this result.

\begin{theo}\label{holder}
	Let $\Omega$ be a smooth and strictly pseudo-convex domain. 
	Assume $0\leq f\in L^p(\Omega)$ ($p>1$) and $\varphi\in C^{2\alpha}(\p\Omega)$. 
	Let $u$ be the solution to  \eqref{cMA} and
	$\hat{u}$ be the solution to $(dd^c\hat{u})^n=0$, 
	subject to the Dirichlet boundary condition $\varphi$.
	If $\triangle \hat{u}$ has finite mass in $\Omega$, then 
	$u\in C^{\alpha'}$  for any $\alpha'<\min(\alpha,\ \frac{2}{p^*n+1})$.
\end{theo}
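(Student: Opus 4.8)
\emph{Strategy and reduction.} The plan is to bound the H\"older modulus of $u$ separately in a collar of $\pom$ and in the interior of $\Omega$, the interior estimate resting on the stability estimate of Theorem \ref{stab}. First I would reduce to smooth data: approximate $f$ by $0\le f_k\in C^\infty$ with $\|f_k\|_{L^p}\le\|f\|_{L^p}$ and $f_k\to f$ in $L^1$, and $\varphi$ by $\varphi_k\in C^\infty(\pom)$ with $\varphi_k\to\varphi$ in $C^{2\alpha}$; the smooth solutions $u_k$ of \eqref{cMA} with data $(f_k,\varphi_k)$ exist by \cite{CKNS}, converge uniformly to $u$ by Theorem \ref{stab}, so it suffices to bound $\|u_k\|_{C^{\alpha'}(\bom)}$ independently of $k$ (one must choose the $\varphi_k$ so that the finite--mass hypothesis survives with a uniform constant). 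Two facts will be used repeatedly: $u\le\hat u$ by the comparison principle; and, with $w\in\mathcal{PSH}_0(\Omega)$ solving $(dd^cw)^n=f\,d\mu$, the superadditivity $(dd^c(\hat u+w))^n\ge(dd^c\hat u)^n+(dd^cw)^n\ge f\,d\mu$ together with the comparison principle gives $\hat u+w\le u\le\hat u$; combined with the $L^\infty$--bound of Theorem \ref{infty} and the finite mass of $\triangle\hat u$, this sandwich supplies the a priori mass (equivalently, energy) control on $u$ that the interior step will need.

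\emph{Boundary estimate.} I would show $|u(z)-u(z')|\le C|z-z'|^{\alpha'}$ whenever one of $z,z'$ lies within $|z-z'|$ of $\pom$. The upper half follows from $u\le\hat u$, the modulus of continuity of $\hat u$ up to $\pom$ being controlled by the finite--mass hypothesis together with $\varphi\in C^{2\alpha}$ and the strict pseudoconvexity of $\Omega$. For the lower half one constructs, near each $z_0\in\pom$, a plurisubharmonic barrier $v$ with $v\le\varphi$ on $\pom$ and $v(z)\ge\varphi(z_0)-C|z-z_0|^{\alpha'}$, whose Monge-Amp\`ere mass on small one--sided neighborhoods of $\pom$ dominates that of $f$ there; this is possible because $p>1$ forces $\int_Ef\le\|f\|_{L^p}|E|^{1/p^*}$, while a suitable concave function of a strictly plurisubharmonic defining function of $\Omega$ has Monge-Amp\`ere density blowing up like a negative power of the distance to $\pom$. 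The comparison principle then yields $v\le u$.

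\emph{Interior estimate.} For $s\in(0,s_0)$ let $\Omega_s=\{z:\mathrm{dist}(z,\pom)>s\}$ and
\beq
u_s(z)=\sup_{|\zeta|\le s}u(z+\zeta),\qquad
\hat u_s(z)=\frac1{|B_s|}\int_{|\zeta|\le s}u(z+\zeta)\,d\zeta ,
\eeq
both plurisubharmonic on $\Omega_s$, with $u\le\hat u_s\le u_s$. It suffices to prove $\sup_{\Omega_s}(u_s-u)\le Cs^{\alpha'}$, since with the boundary estimate this gives the global $C^{\alpha'}$--bound: for $z,z'$ with $r:=|z-z'|$ and both lying in $\Omega_{2r}$ one has $u(z')-u(z)\le u_r(z)-u(z)\le Cr^{\alpha'}$ and symmetrically, while the complementary case is the boundary estimate. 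On $\p\Omega_s$ the boundary estimate gives $u_s-u\le Cs^{\alpha'}$; the sub--mean--value inequality and the mass control from the first step give the $L^1$--smallness $\int_{\Omega_s}(\hat u_s-u)\,d\mu\le Cs^2$. The crux is to upgrade this $L^1$--smallness of $\hat u_s-u$ to a pointwise bound on $u_s-u$ with only a controlled loss of exponent. This I would do via the comparison principle for $(dd^c\cdot)^n$ and the stability estimate of Theorem \ref{stab}: compare $u$ with $u_s$ on $\Omega_s$ through the Dirichlet solution on $\Omega_s$ with right--hand side $f\,d\mu$ and boundary values $u_s$ on $\p\Omega_s$, using that a sup--convolution is a subsolution with an inf--regularized right--hand side, and insert the $L^1$--smallness into Theorem \ref{stab}. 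Propagating the exponent $\frac1n\frac{\delta}{1+\delta}$ of Theorem \ref{stab}, with $\delta\in(0,\frac1{np^*})$, through this argument yields the admissible range $\alpha'<\min(\alpha,\frac{2}{np^*+1})$.

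\emph{Main obstacle.} The hardest part will be this last $L^1$--to--$L^\infty$ upgrade: a sup--convolution carries no right--hand side lying in $L^p$ with controlled norm, so the comparison must be routed with care, and it is exactly here that the Sobolev inequality of Theorem \ref{sobolev} --- through the stability estimate it underlies --- and the precise value of its exponent are used. A secondary but genuine technical point is the barrier construction of the boundary estimate when $f$ is merely in $L^p$.
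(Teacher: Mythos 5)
Your reduction to smooth data and the overall convolution-based scaffolding (working with $u_\epsilon$, $\hat u_\epsilon$ on $\Omega_\epsilon$, proving $\sup(\hat u_\epsilon-u)\lesssim\epsilon^{\alpha'}$, and invoking the GKZ barrier to go from the convolution to a globally defined plurisubharmonic comparison function) are in the right spirit and close to what the paper does. You also put your finger on the central difficulty --- that a sup-convolution does not solve a Monge--Amp\`ere equation with controlled $L^p$ right-hand side --- but the fix you propose does not close, and it is exactly here that the paper's argument diverges from yours.

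You propose to route the $L^1$-to-$L^\infty$ upgrade through Theorem \ref{stab}. That theorem compares two functions \emph{both} of which solve Monge--Amp\`ere equations with $L^p$ right-hand sides, bounding $\|u-v\|_\infty$ in terms of $\|f-g\|_{L^1}$ and the boundary discrepancy. Neither $u_\epsilon$ nor your auxiliary Dirichlet solution on $\Omega_\epsilon$ with boundary data $u_\epsilon$ escapes the obstacle you already identified: you would still need a right-hand side in $L^p$ with a norm controlled uniformly in $\epsilon$, and you would still need to feed an $L^1$ bound on $\hat u_\epsilon-u$ (a difference of \emph{solutions}, not of right-hand sides) into an estimate that only accepts $\|f-g\|_{L^1}$. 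Moreover, even if this could be made to work, the exponent coming from Theorem \ref{stab} is $\frac1n\frac{\delta}{1+\delta}$, which for $\delta$ near $\frac{1}{np^*}$ is about $\frac{1}{n(np^*+1)}$; combined with the $O(\epsilon^2)$ smallness of $\|\hat u_\epsilon-u\|_{L^1}$ this gives $\alpha'<\frac{2}{n(np^*+1)}$, weaker than the claimed $\frac{2}{np^*+1}$ by a factor $\frac1n$. What the paper actually proves is a one-sided stability estimate (Lemma \ref{stab-holder} and Proposition \ref{prep-holder}): if $u$ is bounded plurisubharmonic with $(dd^cu)^n=f\,d\mu$, $0\le f\in L^p$, and $v$ is \emph{any} bounded plurisubharmonic function with $v\le u$ on $\pom$, then $\sup_\Omega(v-u)\le C\,\|\max(v-u,0)\|_{L^r(\Omega)}^\gamma$ for any $\gamma<\frac{r}{np^*+r}$. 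Crucially, $v$ need satisfy no equation at all; only $u$ carries the $L^p$ right-hand side. The proof runs the same level-set iteration as the $L^\infty$ estimate of Section 2 on the sets $\{v-u-\epsilon>s\}$, using the Sobolev inequality via auxiliary Dirichlet problems with right-hand side $f\cdot\chi$. Plugging in $v=\tilde u_\epsilon$ (the GKZ barrier), $r=1$, $\gamma<\frac{1}{np^*+1}$ and the $O(\epsilon^2)$ bound of Lemma \ref{lapalace-control} yields the correct exponent. You should replace the appeal to Theorem \ref{stab} with this sharper, one-sided estimate; as written the argument does not go through.

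Two secondary points. First, the finite-mass hypothesis on $\triangle\hat u$ is used in a specific way you gloss over: one builds a smooth subsolution $b$ to the zero-boundary-value problem with $\triangle b$ of finite mass, deduces by comparison that $\triangle u_0$ has finite mass, and then $w=u_0+\hat u$ is a subsolution to \eqref{cMA} with $\triangle w$ of finite mass, whence $\triangle u$ has finite mass by comparison; this is what allows $r=1$ in Lemma \ref{lapalace-control}. Second, your separate ``boundary estimate'' step is folded, in the paper, into the construction of $\tilde u_\epsilon$ from \cite{GKZ}, which encodes both the boundary H\"older bound (via $\tilde u_\epsilon=u+C\epsilon^\alpha$ near $\pom$) and the interior comparison with $\hat u_\epsilon$; it is cleaner to use that single global object than to maintain a two-region decomposition.
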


According to \cite{GKZ}, the technical condition on $\hat u$ is satisfied when $\varphi\in C^{1,1}(\partial \Omega)$.

To obtain the a priori estimates for the complex Monge-Amp\`ere equation in Theorems \ref{infty}-\ref{holder},
we will employ various techniques developed in previous works on 
Monge-Amp\`ere type equations. 
Some refinements and improvements are needed in applying these techniques.

To prove the uniform estimate \eqref{ue} (Theorem \ref{infty}),
we use an iteration argument to establish a decay estimate \eqref{decay} 
for the Lebesgue measure of the level sets.
This iteration was used by Chou and the second author in \cite {CW} for the $k$-Hessian equation.
The third author observed that it can be improved and applied to the complex Monge-Ampere equation \cite{Z}.
Instead of the decay of the Lebesgue measure of the level sets,
Ko\l{}odziej established the decay for the capacity of level sets \cite{K98}.

The stability theorem was first proved  by directly computaion in \cite{CP} when $f, g\in L^2(\Omega)$.
For $f, g\in L^p(\Omega)$ with $p> 1$, B\l{}ocki obtained an $L^n$-$L^1$-stability theorem in \cite{B93}. Then by using capacity estimates, Ko\l{}odziej  proved the $L^\infty$-$L^1$-stability  as in Theorem \ref{stab}  above in \cite{K96, K02}.  
In our iteration proof of Theorem \ref{stab}, we replace the capacity in \cite{K96, K02} by the Lebesgue measure.  
However, since the sets $\Omega_s=\{u-v>s\}$ are not level sets anymore, 
in order to apply our Sobolev type inequality \eqref{up}, 
we will make an extension of  the domain and use an approximation argument.
The key step in the proof of  H\"older regularity (Theorem \ref{holder}) is Proposition \ref{prep-holder},
where we  also replace the capacity in  \cite{GKZ}  by the Lebesgue measure, 
and use a similar iteration argument as in Theorems \ref{infty} 
and \ref{stab}. The rest of the proof follows as in \cite{GKZ}. We will include the details of the proof for convenience of the readers.

The organization of this paper is as follows. 
In Section 2, we establish the uniform estimate. 
In Sections 3, we prove the stability of solutions.
Finally in Section 4 we prove the H\"older regularity of solutions.


\section{The uniform estimate}

In this section we consider the following Dirichlet problem, 
\begin{equation}\label{weak-cMA}
\begin{cases}
(dd^cu)^n=f\,d\mu   &\ \ \text{in\ $\Omega$,}  \\
u=\varphi              &\ \ \text{on\ $\p \Omega$,}
\end{cases}
\end{equation}
where $0\leq f\in L^p(\Omega)\bigcap C(\bar\Omega)$ and $\mu$ is the standard Lebesgue measure.  

\begin{theo}\label{weak-infty}
Assume $\varphi\in C^0(\bar \Omega)$. Let $u\in C^{\infty}(\bar\Omega)$ be a pluri-subharmonic solution to \eqref{weak-cMA}. Then for any $0<\delta<\frac{1}{np^*}$, there is a constant $C>0$ depending on $n$, $p$, $\delta$ and the upper bound of the diameter of $\Omega$, such that 
\begin{align*}
|\inf\limits_{\Omega}u|\leq |\inf\limits_{\p \Omega}\varphi|+C\|f\|_{L^p(\Omega)}^{\frac{1}{n}}\cdot |\Omega|^{\delta}.
\end{align*}
\end{theo}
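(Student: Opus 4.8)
The plan is to run a De Giorgi–type iteration on the level sets of $u$, using the Sobolev inequality \eqref{up} in place of the usual Sobolev embedding. After subtracting the harmonic extension of $\varphi$ we may assume $\varphi \equiv 0$ on $\partial\Omega$ and reduce to bounding $-\inf_\Omega u = \sup_\Omega(-u)$; note $-u \geq 0$ since $u$ is plurisubharmonic with zero boundary data. For $s > 0$ set $\Omega_s = \{u < -s\}$ and $a(s) = |\Omega_s|$; this is a nonincreasing function, and the goal is to show $a(s_0) = 0$ for $s_0 = C\|f\|_{L^p}^{1/n}|\Omega|^\delta$. On $\Omega_s$ the function $w_s := u + s$ lies in $\mathcal{PSH}_0(\Omega_s) \cap C^\infty(\overline{\Omega_s})$, so we may apply \eqref{up} on the domain $\Omega_s$ — with constant controlled by $\operatorname{diam}(\Omega)$ — to get $\|w_s\|_{L^q(\Omega_s)} \leq C \mathcal{E}_{\Omega_s}(w_s)^{1/(n+1)}$ for a suitable exponent $q$ to be chosen.

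Next I would estimate the Monge–Ampère energy: since $(dd^c w_s)^n = (dd^c u)^n = f\, d\mu$ on $\Omega_s$ and $w_s = 0$ on $\partial\Omega_s$, integration by parts gives
\beq
\mathcal{E}_{\Omega_s}(w_s) = c_n\int_{\Omega_s} (-w_s)\, (dd^c w_s)^n = c_n\int_{\Omega_s} (-w_s)\, f\, d\mu \leq c_n \|f\|_{L^p(\Omega_s)} \|w_s\|_{L^{p^*}(\Omega_s)}.
\eeq
Combining this with \eqref{up} applied with $q = p^*$ yields $\|w_s\|_{L^{p^*}(\Omega_s)}^{n+1} \leq C\|f\|_{L^p}\|w_s\|_{L^{p^*}(\Omega_s)}$, hence $\|w_s\|_{L^{p^*}(\Omega_s)} \leq C\|f\|_{L^p}^{1/n}$. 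This is a fixed bound, not yet a decay statement, so the real work is to convert it into a differential inequality for $a(s)$. For $t > s$ we have $\Omega_t \subset \Omega_s$ and $-w_s = -u - s \geq t - s$ on $\Omega_t$, so by Chebyshev $(t-s)\, a(t)^{1/p^*} \leq \|w_s\|_{L^{p^*}(\Omega_s)} \leq C\|f\|_{L^p}^{1/n}$; but I need the $|\Omega|^\delta$ gain, which requires being more careful — reapplying Hölder with a better exponent. Specifically, interpolating, I would bound $\|f\|_{L^p(\Omega_s)}\|w_s\|_{L^{p^*}(\Omega_s)} \leq \|f\|_{L^p}\, a(s)^{1/p^* - 1/q}\|w_s\|_{L^q(\Omega_s)}$ for $q$ slightly bigger than $p^*$, feed this back through \eqref{up}, and arrive at an inequality of the form
\beq
(t - s)\, a(t)^{1/q} \leq C\|f\|_{L^p}^{1/n}\, a(s)^{1/p^* - 1/q} \cdot a(s)^{\,\theta}
\eeq
with an extra positive power $\theta$ of $a(s)$, i.e. $a(t) \leq \frac{C\|f\|_{L^p}^{q/n}}{(t-s)^q}\, a(s)^{1 + \epsilon}$ for some $\epsilon > 0$ tied to the gap between $\frac1{np^*}$ and $\delta$.

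With such a super-linear decay estimate $a(t) \leq \frac{M}{(t-s)^q} a(s)^{1+\epsilon}$ in hand, I would invoke the standard iteration lemma (De Giorgi / Stampacchia): defining $s_k = s_0(2 - 2^{-k})$ one shows $a(s_k) \to 0$ provided $a(s_0)$ is small relative to $M$ and $s_0$, and unwinding the bookkeeping gives exactly $s_0 \leq C\|f\|_{L^p}^{1/n}|\Omega|^\delta$ as the threshold beyond which $a \equiv 0$. This is the argument attributed to \cite{CW} and \cite{Z} in the introduction. The main obstacle — and the step deserving the most care — is the bookkeeping of exponents: one must choose $q$ (equivalently the interpolation parameter) so that (i) \eqref{up} is applicable, (ii) the resulting exponent on $a(s)$ strictly exceeds $1$, and (iii) the final power of $|\Omega|$ can be taken to be any $\delta < \frac{1}{np^*}$, the borderline value corresponding to $q \to p^*$. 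A secondary technical point is justifying the use of \eqref{up} on the sublevel domains $\Omega_s$, which are open and bounded but need not be pseudoconvex or smooth; this is handled by approximating $\Omega_s$ from inside by smooth pseudoconvex domains, or by extending $w_s$ by zero and approximating, exactly the kind of approximation the authors flag for Theorem \ref{stab}. Once these exponent choices are pinned down, the rest is routine.
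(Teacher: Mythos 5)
Your overall scheme -- energy identity plus the Sobolev inequality \eqref{up} plus Chebyshev plus a De Giorgi/Stampacchia-type iteration on the sublevel sets $\Omega_s=\{u<-s\}$ -- is the same as the paper's, and the exponent bookkeeping you flag as the main obstacle is done in the paper exactly as you predict (Hölder with an auxiliary exponent $\beta>1$ so that $\delta=\frac{1}{np^*}-\frac{1}{\beta}(1+\frac{1}{np^*})$ can be pushed arbitrarily close to $\frac{1}{np^*}$). But two of your technical reductions do not work as stated, and both are the places where the paper has to be clever.

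First, you propose to reduce to $\varphi\equiv 0$ by ``subtracting the harmonic extension of $\varphi$.'' This destroys plurisubharmonicity: if $h$ is merely harmonic then $dd^c(u-h)=dd^cu-dd^ch$ need not be nonnegative, and the Monge--Amp\`ere measure of $u-h$ is not $f\,d\mu$ unless $h$ is \emph{pluri}harmonic, which in general does not exist with the prescribed boundary values. The paper instead replaces $\varphi$ by the constant $\inf_{\partial\Omega}\varphi$ and compares: the solution with constant boundary data lies below $u$, and a constant shift leaves the equation unchanged, so one may assume $\varphi=0$. This is the correct reduction.

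Second, and more seriously, your fix for applying \eqref{up} on the sublevel sets is flawed. ``Extending $w_s=u+s$ by zero'' produces $\min(u+s,0)$, which is a \emph{minimum} of plurisubharmonic functions and is not plurisubharmonic, so \eqref{up} cannot be applied to it. (Also a minor point: the sets $\Omega_s$ \emph{are} pseudoconvex -- they are sublevel sets of a psh function, hence hyperconvex -- the obstruction is only their lack of boundary regularity.) The paper's device is different and does work: truncate $f$ rather than $u$, setting $f^k=f\chi_{\bar\Omega_k}$, approximate $f^k$ from above by smooth $f^k_j$, solve the Dirichlet problem $(dd^cv^k_j)^n=f^k_j$ on the \emph{whole} smooth domain $\Omega$ with zero boundary data, pass to the limit $v^k$, and use the comparison principle to conclude $v^k\le u+s_k$ on $\Omega_k$. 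Since $v^k_j\in\mathcal{PSH}_0(\Omega)$ on the full domain, \eqref{up} applies with no regularity issue on the level sets. You need this (or an equivalent construction producing a globally defined psh competitor), not an extension by zero of $w_s$, to close the iteration.
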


\begin{proof} 
For simplicity let us assume $\|f\|_{L^p}=1$. 
Replacing the boundary function by $\inf_{\Omega}\varphi$
and using the comparison principle, it suffices to prove the estimate for the case $\varphi=0$. 
 By \eqref{weak-cMA} and \eqref{up},  we have 
\begin{eqnarray}\label{ineq-1}
\mathcal{E}(u)&= & \frac{n!}{(n+1)\pi^n}\int_{\Omega}(-u)f   \nonumber\\
&\leq & \frac{n!}{(n+1)\pi^n}\|f\|_{L^p(\Omega)}\|u\|_{L^{p^*}(\Omega)}  \\
&\leq & C |\Omega|^{\frac{1}{p^*}(1-\frac{1}{\beta})}  \|u\|_{L^{\beta p^*}(\Omega)} \nonumber \\
&\leq & C|\Omega|^{\frac{1}{p^*}(1-\frac{1}{\beta})}\|u\|_{\mathcal{PSH}_0(\Omega)}, \nonumber
\end{eqnarray}
where $p^*=\frac{p}{p-1}$ is conjugate to $p$ and $\beta>1$. It follows that 
\begin{align}\label{ineq-2}
\|u\|_{\mathcal{PSH}_0(\Omega)}\leq C|\Omega|^{\frac{1}{np^*}(1-\frac{1}{\beta})}.
\end{align}
Using \eqref{up} again,  we have 
\begin{equation}\label{integ} 
\|u\|_{L^1(\Omega)}\leq |\Omega|^{1-\frac{1}{\beta}}\|u\|_{L^{\beta}(\Omega)}\leq C|\Omega|^{1+\delta}, 
\end{equation}  
where $\delta:=\frac{1}{np^*}-\frac{1}{\beta}(1+\frac{1}{np^*})$ and $0<\delta<1$ 
when choosing $\beta>1+np^*$. This implies that for $s>0$, 
\begin{equation}\label{decay}
|\{x\in \Omega\ |\ u< -s\} | \leq C\frac{1}{s}|\Omega|^{1+\delta}.
\end{equation}


Now we proceed to the iteration argument. 
Since each connected component of $\Omega_s:=\{x\in \Omega\ |\ u< -s\}$ 
is hyperconvex and has only almost everywhere smooth boundary for almost every $s\in (0, |\inf_{\Omega}u|)$, 
the Sobolev inequality cannot apply directly. This problem can be avoided by approximation, as follows.

Choose 
$s=s_0=2^{1+\frac{1}{\delta}}C^{1+\frac{1}{\delta}}|\Omega|^{\delta}$ in \eqref{decay}. 
Then we have $|\Omega_{s_0}|\leq \frac{|\Omega|}{2^{1+\frac{1}{\delta}}C^{\frac{1}{\delta}}}\leq \frac{1}{2}|\Omega|$ due to $C>1$. For any $k\in \mathbb{Z}_+$, define 
\begin{align}\label{iteration}
s_k=s_0+\sum_{j=1}^k2^{-\delta j}|\Omega|^{\delta}, \ \ u^k:=u^{s_k},\ \ \Omega_k=\Omega_{s_k}.
\end{align}
For each $\Omega_k$, we define 
$$
f^k:=\Big\{
{\begin{split}
 &f   \ \ \ \text{in}\ \bar{\Omega}_k,\\
  &0   \ \ \ \text{on}\ \Omega\setminus \bar{\Omega}_k.
\end{split}}
$$
Let $f_j^k$ be a sequence of smooth, monotone decreasing approximation of $f^k$ such that 
$\sup_{\Omega_k}|f^k-f_j^k|\to 0$ as $j\to \infty$. 
This implies $\|f_j^k-f^k\|_{L^p(\Omega)}\to 0$ for any $p>1$, 
but we only need $\|f_j^k-f^k\|_{L^2(\Omega)}\to 0$ in order to apply \cite{CP}. 
Consider the Dirichlet problem
\begin{align}\label{app-Diri}
\begin{cases}
(dd^cv)^n=f_j^k,\ \ \ &\text{in}\ \Omega;\\
v=0,           \ \ \ &\text{on}\ \p \Omega.
\end{cases}
\end{align}
Since $u$ is a subsolution to \eqref{app-Diri} when the right-hand side is $f^k$, there exists a solution $v_j^k$ to \eqref{app-Diri}  and $\|v_j^k\|_{L^{\infty}(\Omega)}\leq C$ for some $C>0$ independent of $j$ and $k$ but depends on $\|u\|_{L^{\infty}}$.  
Moreover, $v_j^k$ is monotone increasing. Denote $v^k=\displaystyle\lim _{j\to\infty} v_j^k$. 
Then from the first inequality of \eqref{ineq-1}, 
$$
\mathcal{E}(v_j^k) 
  =\int_{\Omega}(-v_j^k)f_j^k
   \leq  \frac{n!}{(n+1)\pi^n}\|f_j^k\|_{L^p(\Omega)}\|v_j^k\|_{L^{p^*}(\Omega)}.
$$
On the other hand, by the Sobolev ineqaulity,
$$\|v_j^k\|_{L^{\beta p^*}(\Omega)}\leq C\left(\int_{\Omega}(-v_j^k)f_j^k\right)^{\frac{1}{n+1}}.$$
Letting $j\to\infty$, we obtain
$$
\int_{\Omega}(-v^k)f^k
 =\int_{\Omega_k}(-v^k)f \leq  \frac{n!}{(n+1)\pi^n}\|f\|_{L^p(\Omega_k)}\|v^k\|_{L^{p^*}(\Omega_k)}
$$
and
$$\|v^k\|_{L^{\beta p^*}(\Omega)}\leq C\left(\int_{\Omega_k}(-v^k)f\right)^{\frac{1}{n+1}}.$$
As in \eqref{ineq-1} we then obtain
\begin{align}\label{k-integ}
\|v^k\|_{L^1(\Omega_k)}\leq |\Omega_k|^{1-\frac{1}{\beta}}\|v^k\|_{L^{\beta}(\Omega)}\leq C|\Omega_k|^{1+\delta}.
\end{align}
In view of $v^k\leq u^k=u+s_k$ in $\Omega_k$, we obtain
\begin{align}\label{k-integ}
\|u^k\|_{L^1(\Omega_k)}\leq  C|\Omega_k|^{1+\delta}.
\end{align}
Note that the constants in the Sobolev inequalities depend on the upper bound of diameters of the domains. Hence the constants here are uniform for $k$. 

We claim that $|\Omega_{k+1}|\leq \frac{1}{2}|\Omega_k|$ for any $k$. 
By induction, we assume the inequality holds for $k\leq l$. By \eqref{decay} and \eqref{k-integ}, 
\begin{align*}
\frac{1}{2^{\delta (l+1)}}|\Omega|^{\delta}\cdot |\Omega_{l+1}|\leq \|u^l\|_{L^1(\Omega_l)}\leq C|\Omega_l|^{1+\delta}. 
\end{align*}
Hence
\begin{align*}
|\Omega_{l+1}|\leq &  C|\Omega_l|^{1+\delta}\frac{2^{\delta (l+1)}}{|\Omega|^{\delta}}    \\
\leq &  C\left[\left(\frac{|\Omega_0|}{2^l}\right)^{\delta}\frac{2^{\delta (l+1)}}{|\Omega|^{\delta}}\right]\cdot |\Omega_l|  \\
\leq & \left[C^{1+\delta}\frac{2^{\delta}}{s_0^{\delta}}|\Omega|^{\delta^2}\right]\cdot |\Omega_l|   
\leq \frac{1}{2}|\Omega_l |
\end{align*}
by our choice of $s_0$.

By the above claim,  the set 
$$\Big\{x\in \Omega\ \big|\ u<-s_0-\sum_{j=1}^{\infty}\left(\frac{1}{2^{\delta}}\right)^j|\Omega|^{\delta}\Big\}$$
has measure zero. Hence, 
$${\begin{split}
\|u\|_{L^{\infty}(\Omega)}
  & \leq s_0+\sum_{j=1}^{\infty}\left(\frac{1}{2^{\delta}}\right)^j|\Omega|^{\delta}\\
  & =2^{1+\frac{1}{\delta}}C^{1+\frac{1}{\delta}}|\Omega|^{\delta}+\frac{1}{2^{\delta}-1}|\Omega|^{\delta}\\
  & \leq C|\Omega|^{\delta}.
  \end{split}} $$
\vskip-20pt
\end{proof}

\noindent{\bf Remark 2.1.}
The proof of Theorem \ref{weak-infty} was first given in \cite{Z},
which is a research report in the School of Mathematical Sciences, Peking University. 
This report series has only two issues and then stopped. 
It is unavailable in other universities  either in China or overseas.
Therefore we include the details of the proof in this paper.
Here we also refine the argument to obtain  the constant $C\|f\|_{L^p(\Omega)}^{\frac{1}{n}}\cdot |\Omega|^{\delta}$ 
for later use in the H\"older regularity.



\section{Stability estimate}

In this section, we prove the stability theorem without using the pluripotential theory. 

Let $u$, $v\in C^{\infty}(\bar\Omega)\bigcap \mathcal{PSH}(\Omega)$ be the solutions to \eqref{cMA} and \eqref{cMA2}, respectively.
 Let $w$, $w_0$ be the solutions to the Dirichlet problems
\begin{align*}
\begin{cases}
(dd^cw)^n=|f-g|\,d\mu\ \ &\text{in\ $\Omega$,}  \\
w=-|\varphi-\psi|       \ \ &\text{on\ $\p \Omega$,}
\end{cases}'
\end{align*}
and
\begin{align*}
\begin{cases}
(dd^cw_0)^n=|f-g|\,d\mu\ \ &\text{in\ $\Omega$,}  \\
w_0=0       \ \ &\text{on\ $\p \Omega$,}
\end{cases}
\end{align*}
 respectively.
By the pluri-subharmonicity, 
$${\begin{split}
(dd^c(v+w))^n
 & \geq (dd^cv)^n+(dd^cw)^n \\
 & =g\,d\mu+|f-g|\,d\mu\\
 & \geq f\,d\mu =(dd^cu)^n.
 \end{split}}
$$
Then by the comparison principle, we have $u-v\geq w$ and $w\geq w_0-\sup\limits_{\p \Omega}|\varphi-\psi|$ in $\Omega$. 
Hence,
$$\|u-v\|_{L^{1}(\Omega)}\leq \|w\|_{L^{1}(\Omega)}\leq \|w_0\|_{L^1(\Omega)}+|\Omega|\cdot\|\varphi-\psi\|_{L^{\infty}(\p \Omega)}.$$
 Let $\Omega\subset B_R(0)$ for some $R>0$. 
Next we apply Theorem 2.1 in \cite{B93} 
It holds that
\beq\label{integ-stab}
{\begin{split}
\|w_0\|_{L^n(\Omega)}
  & \leq n!R^{2n}\left[\int_{\Omega}(dd^cw_0)^n\right]^{\frac{1}{n}}\\
  & =n!R^{2n}\|f-g\|_{L^1(\Omega)}^{\frac{1}{n}}.
  \end{split}}
\eeq
Therefore, we have
$$\|u-v\|_{L^1(\Omega)}\leq n!R^{2n}|\Omega|^{1-\frac{1}{n}}\cdot\|f-g\|_{L^1(\Omega)}^{\frac{1}{n}}+|\Omega|\cdot\|\varphi-\psi\|_{L^{\infty}(\p \Omega)}.$$

\vskip 10pt

Now we use an iteration argument, similarly to that in Theorem \ref{weak-infty}, to obtain the stability.
Denote 
$$t:=\big(n!R^{2n}|\Omega|^{1-\frac{1}{n}}\cdot\|f-g\|_{L^1(\Omega)}^{\frac{1}{n}}+|\Omega|\cdot\|\varphi-\psi\|_{L^{\infty}(\p \Omega)}\big)^{\frac{\delta}{1+\delta}},$$
where $\delta$ will be determined later.
For any $s>0$, denote $\Omega_s:=\{u-v>st\}$. Then it is clear that
$$st\cdot |\Omega_s|\leq \|u-v\|_{L^1(\Omega)}\leq t^{1+\frac{1}{\delta}}.$$
This implies 
\begin{align}\label{decay-stab}
|\Omega_s|\leq t^{\frac{1}{\delta}}s^{-1}. 
\end{align}
Note that $v^s:=v+st<u$ solves 
\begin{align*}
\begin{cases}
(dd^cv^s)^n=g\,d\mu\ \ & \text{in $\Omega_s$,}  \\
v^s=u                 \ \ & \text{on $\p \Omega_s$.}
\end{cases}
\end{align*}

Now we consider an upper-continuous function
\begin{align*}
g^s(x)=\Big\{
\begin{split}
 &g(x) ,    \ \ \text{\ in\ $\bar\Omega_s$,}  \\
&\ 0 ,       \ \ \ \text{\ on\ $\Omega\setminus \bar\Omega_s$.}
\end{split}
\end{align*}
Let $\{g^s_{j}\}$ be a decreasing smooth approximation of $g^s$ such that  
$\sup_{\Omega_s}|g^s_{j}-g^s|\to 0$ as $j\to \infty$. Let $\tilde v^s_j$ be the solution to 
\begin{align*}
\begin{cases}
(dd^c\tilde v^s_{j})^n=g^s_{j} \, d\mu\ \ & \text{in $\Omega$,}  \\
\tilde v^s_{j}=0                 \ \ & \text{on $\p \Omega$,}
\end{cases}
\end{align*}
where $\mu$ is the standard Lebesgue measure. 
Then we have 
$$(dd^c(\tilde v^s_{j}+u))^n\geq (dd^cv^s)^n=g\,d\mu$$ in $\Omega_s$ 
and $\tilde v^s_{j}+u\leq v^s$ on $\p \Omega_s$. By the comparison principle, we have 
$$0\geq v^s-u\geq \tilde v^s_{j}\ \ \text{in $\Omega_s$.} $$
Note that by the Sobolev inequality \eqref{up}, 
$$\|\tilde v^s_{j}\|_{L^p(\Omega_s)}\leq \|\tilde v^s_{j}\|_{L^p(\Omega)}
   \leq C\cE(\tilde v^s_{j})^{\frac{1}{n+1}}=\left[\int_{\Omega}(-\tilde v^s_{j})g^s_{j}\right]^{\frac{1}{n+1}}.$$
As $j\to\infty$, 
$\tilde{v}^s_{j}$ converges uniformly to a function $\tilde{v}^s$ and 
$$\int_{\Omega}(-\tilde{v}^s_{j})g^s_{j}\to \int_{\Omega}(-\tilde{v}^s)g_s=\int_{\Omega_s}(-\tilde{v}^s)g.$$ 
Therefore, 
$$\|\tilde{v}^s\|_{L^p(\Omega_s)}\leq \Big[\int_{\Omega_s}(-\tilde{v}^s)g\Big]^{\frac{1}{n+1}} . $$
Hence, for $\beta>1$,
\begin{eqnarray*}
\int_{\Omega_s}(-\tilde v^s)g&\leq & C\|\tilde v^s\|_{L^{p^*}(\Omega_s)}  \\
&\leq & C\|\tilde v^s\|_{L^{\beta p^*}(\Omega_s)}|\Omega_s|^{\frac{1}{p^*}\left(1-\frac{1}{\beta}\right)}   \\
&\leq & C\Big[\int_{\Omega_s}(-\tilde{v}^s)g\Big]^{\frac{1}{n+1}}|\Omega_s|^{\frac{1}{p^*}\left(1-\frac{1}{\beta}\right)}.
\end{eqnarray*}
Then we have
$$
 \|\tilde v^s\|_{L^1(\Omega_s)}
  \leq |\Omega_s|^{1-\frac{1}{\beta}}\|\tilde v^s\|_{L^{\beta}(\Omega_s)}
  \leq C|\Omega_s|^{1-\frac{1}{\beta}+\frac{1}{n}\frac{1}{p^*}\left(1-\frac{1}{\beta}\right)}.
$$
Let $\delta=-\frac{1}{\beta}+\frac{1}{n}\frac{1}{p^*}\left(1-\frac{1}{\beta}\right)$. 
We obtain
\begin{equation}\label{decay1}
\|v^s-u\|_{L^1(\Omega_s)}\leq \|\tilde v^s\|_{L^1(\Omega)}\leq C|\Omega_s|^{1+\delta}.
\end{equation}

By \eqref{decay-stab}, 
we can choose $s_0$ large such that $|\Omega_{s_0}|\leq \frac{1}{2}|\Omega|$. 
Denote $s_k:=s_0+\sum_{j=1}^{\infty}2^{-\delta j}$ and $\Omega_k:=\Omega_{s_k}$. 
We claim $|\Omega_{k+1}|\leq \frac{1}{2}|\Omega_k|$. By induction, we assume the inequality holds for $k\leq l$. By \eqref{decay1}, 
\begin{eqnarray*}
|\Omega_{l+1}|&\leq & C\frac{2^{(l+1)\delta}}{t}|\Omega_l|^{1+\delta}  \\
&\leq & C\frac{2^{(l+1)\delta}}{t}\left(\frac{|\Omega_0|}{2^l}\right)^{\delta}|\Omega_l|  \\
&\leq & C2^{\delta}\frac{|\Omega|^{\delta}}{s_0^{\delta}}|\Omega_l|\leq \frac{1}{2}|\Omega_l|.
\end{eqnarray*}
Hence, the claim holds provided $s_0>2^{1+\delta}C|\Omega|^{\delta}$.  
By the claim, we obtain $\big|\bigcap\limits_{l=0}^{\infty} \Omega_l\big|=0$. This implies
$$u-v\leq \Big(s_0+\frac{2^{\delta}}{2^{\delta}-1}\Big)t.$$
The result follows by exchanging $u$ and $v$. 

\vskip 10pt

\noindent{\bf Remark 3.1.}
(1)
Note that in the above proof, we only need the continuity of $g$, but not that of $f$.
By a same argument, we can also obtain the stability for the general case, 
namely when $u, v\in L^{\infty}(\Omega)\bigcap \mathcal{PSH}(\Omega)$.

Indeed, suppose $u$, $v\in L^{\infty}(\Omega)\bigcap \mathcal{PSH}(\Omega)$ and $f$, $g\in L^p$.
Without loss of generality,  we suppose that $u$, $v$ vanish on the boundary. 
For any $\epsilon>0$, let $w$ be the solution to
\begin{align*}
\begin{cases}
(dd^cw)^n=h\,d\mu,\ \ \ & \text{in $\Omega$,}   \\
w=0,           \ \ \ & \text{on $\p \Omega$,}
\end{cases}            
\end{align*}
where we require $h>0$, $h\in C(\Omega)$ and $\|h-g\|_{L^1}^{\frac{1}{n}\frac{\delta}{1+\delta}}\leq \frac{\epsilon}{2}$. Then 
\begin{eqnarray*}
\|u-v\|_\infty&\leq& \|u-w\|_{\infty}+\|w-v\|_{\infty}\\
&\leq& C\Big(\|f-h\|_{L^1}^{\frac{1}{n}\frac{\delta}{1+\delta}}+\|h-g\|_{L^1}^{\frac{1}{n}\frac{\delta}{1+\delta}}\Big)\\
&\leq& C\left(\|f-g\|_{L^1}+\|g-h\|_{L^1}\right)^{\frac{1}{n}\frac{\delta}{1+\delta}}+C\frac{1}{2}\epsilon\\
&\leq& C\|f-g\|_{L^1}^{\frac{1}{n}\frac{\delta}{1+\delta}}+C\epsilon.
\end{eqnarray*}
Letting $\epsilon\to 0$, we obtain the stability in the general case.

 (2)
After obtaining the stability for the case $u\in C^{\infty}(\bar{\Omega})\cap \mathcal{PSH}_0(\Omega)$ 
and $v \in L^{\infty}_{loc}(\Omega)\cap \mathcal{PSH}_0(\Omega) $,  we can obtain \eqref{up} 
for general $u\in L^{\infty}_{loc}(\Omega)\cap \mathcal{PSH}_0(\Omega)$ by the following argument. 
Let $u\in L^{\infty}_{loc}(\Omega)\cap \mathcal{PSH}_0(\Omega)$ and let $f\,d\mu=(dd^cu)^n$ in the sense of measure, 
where $f\in L^1(\Omega)$. Let $f_{j}$ be a smooth approximation of $f$ 
and $u_j$ be the corresponding smooth solutions with vanishing boundary values. 
Then we have 
$$\|u_j\|_{L^p(\Omega)}\leq C\Big(\int_{\Omega}(-u_j)f_j\,d\mu\Big)^{\frac{1}{n+1}}. $$
Taking limits on the both sides, we obtain the inequality for $u$ and $f$. 
Moreover, Theorem \ref{infty} also holds by a similar argument. 

(3)
Note that when $\Omega$ is  smooth and strictly pseudo-convex, 
by the stability (Theorem \ref{stab}), the solution to \eqref{cMA} is continuous. 
In fact, let $f_j\in C^{2,\alpha}(\Omega)$ such that $f_j>0$, $\|f_j-f\|_{L^p}\to 0$, 
and let $\varphi_j\in C^2(\p \Omega)$ such that $\varphi_j\to \varphi$ uniformly.
Then the corresponding smooth solutions $u_j$ converges to $u$ uniformly by letting $v=u_j$ in Theorem \ref{stab}.

\section{The H\"older continuity}

In this section, we give a PDE proof for the H\"older continuity.
We first establish the following estimate.

\begin{lem}\label{stab-holder}
Let $u$, $v$ be  bounded pluri-subharmonic functions in $\Omega$ satisfying $u\geq v$ on $\p \Omega$. 
Assume $(dd^cu)^n=f\,d\mu$ and $0\leq f\in L^p(\Omega)$, $p>1$,
where $\mu$ is the standard Lebesgue measure. 
Then  $\forall\ 0<\delta<\frac{1}{np^*}$,   $\exists\ C>0$,  such that $\forall\ \epsilon>0$,
\beq\label{vu2}
\sup\limits_{\Omega}(v-u)\leq \epsilon+C|\{v-u>\epsilon\}|^{\delta}.
\eeq
\end{lem}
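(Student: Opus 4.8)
The plan is to mimic the iteration argument of Theorems \ref{weak-infty} and \ref{stab}, but now applied to the superlevel sets $\Omega_\epsilon := \{v-u > \epsilon\}$. First I would set up the basic comparison: on each such set, $v$ is plurisubharmonic with $v = u + \epsilon$ on $\partial \Omega_\epsilon$, and $u$ restricted to $\Omega_\epsilon$ solves $(dd^c u)^n = f\,d\mu$ with $u \le v - \epsilon$ on the boundary. Thus $u$ is a subsolution, on $\Omega_\epsilon$, of the Dirichlet problem with right-hand side $f$ (extended by zero outside $\bar\Omega_\epsilon$) and boundary data $v-\epsilon$. Subtracting off the boundary data and using the comparison principle, I would produce an auxiliary plurisubharmonic function $\tilde w$ on $\Omega_\epsilon$, vanishing on $\partial\Omega_\epsilon$, with $(dd^c\tilde w)^n = f^\epsilon\,d\mu$ (where $f^\epsilon = f\,\chi_{\bar\Omega_\epsilon}$) and satisfying $0 \ge v - u - \epsilon \ge \tilde w$ on $\Omega_\epsilon$. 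As in the earlier proofs, since $\Omega_\epsilon$ is merely hyperconvex with only almost-everywhere smooth boundary, I would approximate $f^\epsilon$ from above by smooth $f^\epsilon_j$ and pass to the limit in the solutions $\tilde w_j$, which are monotone and uniformly bounded (using that $u$ provides a uniform subsolution barrier).

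Next, applying the Sobolev inequality \eqref{up} to $\tilde w_j$ on $\Omega_\epsilon$ and the first line of \eqref{ineq-1}, I get
\beq
\|\tilde w_j\|_{L^{\beta p^*}(\Omega_\epsilon)} \le C\Bigl(\int_{\Omega}(-\tilde w_j)\,f^\epsilon_j\Bigr)^{\frac{1}{n+1}} \le C\Bigl(\|f^\epsilon_j\|_{L^p}\,\|\tilde w_j\|_{L^{p^*}(\Omega_\epsilon)}\Bigr)^{\frac{1}{n+1}},
\eeq
and absorbing, then interpolating between $L^{p^*}$ and $L^{\beta p^*}$ as in \eqref{ineq-1}, I obtain after letting $j\to\infty$ the key decay-type bound
\beq
\|v-u-\epsilon\|_{L^1(\Omega_\epsilon)} \le \|\tilde w\|_{L^1(\Omega_\epsilon)} \le C\,|\Omega_\epsilon|^{1+\delta},
\eeq
with $\delta = -\tfrac{1}{\beta} + \tfrac{1}{n}\tfrac{1}{p^*}(1-\tfrac{1}{\beta})$, which ranges over $(0,\tfrac{1}{np^*})$ as $\beta$ varies. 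Crucially, the Sobolev constant depends only on an upper bound for $\operatorname{diam}(\Omega_\epsilon) \le \operatorname{diam}(\Omega)$, so $C$ is uniform.

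With this in hand I would run the iteration: set $M = \sup_\Omega(v-u)$; from $\epsilon\cdot|\{v-u>\epsilon\}| \le \|v-u\|_{L^1}$ and the above inequality, build a geometrically increasing sequence of levels $s_k = \epsilon_0 + \sum_{j=1}^k 2^{-\delta j}\,(\text{something})$ (more simply, iterate in multiples of $M$ as in Theorem \ref{stab}) and prove inductively that $|\Omega_{s_{k+1}}| \le \tfrac12 |\Omega_{s_k}|$ once the starting level is chosen $\gtrsim C|\Omega_\epsilon|^\delta$; letting $k\to\infty$ forces $\bigl|\bigcap_k \Omega_{s_k}\bigr| = 0$, hence $\sup_\Omega(v-u) \le \epsilon + C|\Omega_\epsilon|^\delta$, which is \eqref{vu2}. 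The main obstacle, as in the previous theorems, is the boundary-regularity issue for the superlevel sets: $\partial\Omega_\epsilon$ is not smooth, so the Sobolev inequality \eqref{up} does not apply directly to functions on $\Omega_\epsilon$. The fix is the monotone smooth approximation of the right-hand side together with the uniform barrier coming from $u$ itself, exactly the device used in Theorems \ref{weak-infty} and \ref{stab}; checking that the constants stay uniform in $\epsilon$ (again via the diameter bound) is the remaining point requiring care.
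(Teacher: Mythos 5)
Your proposal is correct and rests on the same key device as the paper's proof: you construct an auxiliary plurisubharmonic function (your $\tilde w$, the paper's $u_0$) as the Dirichlet solution on the \emph{smooth} domain $\Omega$ with right-hand side $f\chi_{\bar\Omega_\epsilon}$ and zero boundary values, then use the comparison principle to pin down $0\ge v-u-\epsilon\ge\tilde w$ on $\Omega_\epsilon=\{v-u>\epsilon\}$, thereby sidestepping the rough boundary of $\Omega_\epsilon$. The one place where you take a longer road is the endgame: the paper, having established $u_0\le u+\epsilon-v\le 0$, simply observes $\sup_{\Omega_\epsilon}(v-u-\epsilon)\le\|u_0\|_{L^\infty}$ and invokes the proof of Theorem~\ref{weak-infty} applied to $u_0$ — whose right-hand side is supported in $\Omega_\epsilon$, so the iteration there delivers $\|u_0\|_{L^\infty}\le C|\Omega_\epsilon|^\delta$ directly — whereas you use $\tilde w$ only to extract the $L^1$ decay $\|v-u-\epsilon\|_{L^1(\Omega_\epsilon)}\le C|\Omega_\epsilon|^{1+\delta}$ and then re-run a fresh iteration on the superlevel sets $\{v-u>s\}$ in the style of Theorem~\ref{stab}, which requires rebuilding an auxiliary function at every level. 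Both reach \eqref{vu2}; the paper's reduction to the already-proved uniform estimate is simply the more economical packaging of the same argument.
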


\begin{proof}
The proof is  similar to that of Theorem \ref{stab}. 
Denote $u_{\epsilon}:=u+\epsilon$ and $\Omega_{\epsilon}:=\{v-u_{\epsilon}>0\}$.
Then it suffices to estimate $\|u_{\epsilon}-v\|_{L^{\infty}(\Omega_{\epsilon})}$.

Note that $\Omega_{\epsilon}\Subset\Omega$ and $u_{\epsilon}$ solves 
$$
\Big\{ {\begin{split}
 & (dd^cu_{\epsilon})^n=f\,d\mu \ \ \ \  \text{in $\Omega_{\epsilon}$,} \\[-3pt]
 & \ u_{\epsilon}=v          \hskip55pt    \ \text{on $\pom_{\epsilon}$.}
\end{split}}
$$
Let
$$f_0=
\Big\{ {\begin{split}
f  \ \ &\text{in\ $\Omega_{\epsilon}$} , \\[-3pt]
0  \ \ &\text{on\ $\Omega\setminus\Omega_{\epsilon}$} ,
\end{split}}$$ 
and $u_0$ be the solution to the Dirichlet problem
$$
\Big\{ {\begin{split}
 &(dd^cu_{0})^n=f_0\,d\mu \ \ \ \ \text{in $\Omega$,} \\[-5pt]
 & \ u_{0}=0           \hskip55pt      \ \ \text{on $\pom$.}
\end{split}}
$$
By the comparison principle we have 
$$u_0\leq u_{\epsilon}-v\leq 0 \ \ \text{in\ $\Omega_{\epsilon}$}.$$ 
Hence, by checking the proof of Theorem \ref{weak-infty}, we obtain, similarly,
$$\|u_{\epsilon}-v\|_{L^{\infty}(\Omega_{\epsilon})} 
        \leq \|u_0\|_{L^{\infty}(\Omega_\epsilon)}\leq C|\Omega_{\epsilon}|^{\delta}.$$
\end{proof}

We can now prove the following key estimate without using the capacity theory.

\begin{prop}\label{prep-holder}
Let $u$, $v$ be bounded pluri-subharmonic functions in $\Omega$ 
satisfying $u\geq v$ on $\p \Omega$. 
Assume that $(dd^cu)^n=f\,d\mu$ and $0\leq f\in L^p(\Omega)$, $p>1$, 
where $\mu$ is the standard Lebesgue measure.
Then for $r\geq 1$ and $0\leq \gamma<\frac{r}{np^*+r}$, it holds 
\beq\label{vu}
\sup\limits_{\Omega}(v-u)\leq C\|\max(v-u),0)\|_{L^r(\Omega)}^{\gamma}
\eeq
for a uniform constant $C=C(\gamma,\|f\|_{L^p(\Omega)},\|v\|_{L^{\infty}})>0$. 
\end{prop}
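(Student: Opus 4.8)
The plan is to deduce Proposition~\ref{prep-holder} from Lemma~\ref{stab-holder} by interpolating between the $L^\infty$ bound on $v-u$ and the $L^r$ norm of its positive part. Set $M:=\sup_\Omega(v-u)$; we may assume $M>0$, otherwise there is nothing to prove. For a parameter $\epsilon\in(0,M)$ to be chosen, apply \eqref{vu2} to get $M\le \epsilon + C|\{v-u>\epsilon\}|^\delta$ for any $\delta\in(0,\frac{1}{np^*})$. The first step is to control the level set measure by the $L^r$ norm via Chebyshev: since $v-u>\epsilon$ on $\{v-u>\epsilon\}$, we have $|\{v-u>\epsilon\}|\le \epsilon^{-r}\|\max(v-u,0)\|_{L^r(\Omega)}^r$. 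Writing $N:=\|\max(v-u,0)\|_{L^r(\Omega)}$, this yields
\beq
M\le \epsilon + C\,\epsilon^{-r\delta} N^{r\delta}.
\eeq

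The second step is the optimization in $\epsilon$. Balancing the two terms suggests $\epsilon \sim N^{r\delta/(1+r\delta)}$, which gives $M\le C' N^{r\delta/(1+r\delta)}$. So the exponent we obtain is $\gamma = \frac{r\delta}{1+r\delta}$; as $\delta$ ranges over $(0,\frac{1}{np^*})$, the quantity $\frac{r\delta}{1+r\delta}$ is increasing in $\delta$ and ranges over $(0, \frac{r}{np^*+r})$, which is exactly the admissible range of $\gamma$ in the statement. Thus given any $\gamma<\frac{r}{np^*+r}$ we pick $\delta$ with $\frac{r\delta}{1+r\delta}=\gamma$, i.e. $\delta = \frac{\gamma}{r(1-\gamma)}$, which lies in $(0,\frac1{np^*})$, and run the argument. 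One should double-check that the chosen $\epsilon$ indeed lies in $(0,M)$: if $\epsilon\ge M$ the estimate $M\le\epsilon$ is even easier and the conclusion $M\le C'N^\gamma$ still follows after noting $\epsilon$ is a fixed power of $N$ (one may need to split into the cases $N$ small / $N$ large, or equivalently observe that when $\epsilon\ge M$ one simply has $M\le\epsilon=CN^\gamma$).

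The only genuine subtlety — and the place to be careful — is the dependence of the constant $C$ in \eqref{vu2} on the data, and tracking how it enters the final constant. In Lemma~\ref{stab-holder} the constant depends on $n,p,\delta,\mathrm{diam}(\Omega)$ and, through the $L^\infty$ bound coming from Theorem~\ref{weak-infty} applied to $u_0$, on $\|f\|_{L^p(\Omega)}$; the role of $\|v\|_{L^\infty}$ is to make $\Omega_\epsilon=\{v-u>\epsilon\}\Subset\Omega$ a bounded set with diameter controlled, and to guarantee $M<\infty$ so the optimization makes sense. So the claimed dependence $C=C(\gamma,\|f\|_{L^p(\Omega)},\|v\|_{L^\infty})$ is exactly what falls out, once $\mathrm{diam}(\Omega)$ is regarded as fixed. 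I expect no real obstacle beyond this bookkeeping: the measure-theoretic and optimization steps are elementary, and the substantive analytic content (the decay estimate via the Sobolev inequality) is already packaged inside Lemma~\ref{stab-holder}.
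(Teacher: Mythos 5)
Your argument is correct and is essentially the paper's own proof: both use Chebyshev to bound $|\{v-u>\epsilon\}|$ by $\epsilon^{-r}\|\max(v-u,0)\|_{L^r}^r$, plug into the estimate of Lemma~\ref{stab-holder}, and choose $\epsilon$ as a power $\|\max(v-u,0)\|_{L^r}^\gamma$ so that the two resulting terms have the same order, yielding the exponent $\gamma=\frac{\delta r}{1+\delta r}$ with $\delta$ taken close to $\frac{1}{np^*}$. Your explicit remark about checking that the chosen $\epsilon$ lies below $\sup(v-u)$ and handling the regime where $\|\max(v-u,0)\|_{L^r}$ is not small (which is where the dependence on $\|v\|_{L^\infty}$ enters) fills in a point the paper passes over silently, but the substance is identical.
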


\begin{proof} Note that for any $\epsilon>0$, 
$${\begin{split}
|\{v-u>\epsilon\}| 
 & \leq  \epsilon^{-r}\int_{\{v-u>\epsilon\}}|v-u|^{r}  \\
 &  \leq \epsilon^{-r}\int_\Omega[\max(v-u,0)]^r.
 \end{split}} $$
Let $\epsilon:=\|\max(v-u,0)\|_{L^r(\Omega)}^{\gamma}$, 
 where $\gamma$ is to be determined. 
By Lemma \ref{stab-holder}, we have
\begin{eqnarray}
\sup\limits_{\Omega}(v-u)
  &\leq & \epsilon+C|\{v-u>\epsilon\}|^{\delta} \label{vu3} \\
  &\leq & \|\max(v-u,0)\|_{L^r(\Omega)}^{\gamma}+C\|\max(v-u,0)\|_{L^r(\Omega)}^{\delta r-\delta\gamma r}.  \nonumber
\end{eqnarray}
Choose $\delta<\frac{1}{np^*}$ and  close to $\frac{1}{np^*}$,
and choose $\gamma\leq \delta r-\delta\gamma r$, namely 
$$\gamma\leq \frac{\delta r}{1+\delta r}<\frac{r}{np^*+r}.$$
Then \eqref{vu} follows from \eqref{vu3}.
\end{proof}

\vskip 10pt


For any $\epsilon>0$, we denote $\Omega_\epsilon:=\{x\in \Omega|\, dist(x,\pom)>\epsilon\}$.
Let
\begin{eqnarray*}
u_{\epsilon}(x)&:=&\sup\limits_{|\zeta|\leq \epsilon}u(x+\zeta),\ x\in\Omega_{\epsilon},\\
\hat{u}_{\epsilon}(x)&:=&\bbint_{|\zeta-x|\leq \epsilon}u(\zeta)d\mu,\ x\in\Omega_{\epsilon}.
\end{eqnarray*}
Since $u$ is plurisubharmonic in $\Omega$,  $u_\epsilon$ is a  plurisubharmonic function.
For the H\"older estimate,  it suffices to show there is a uniform constant $C>0$ such that $u_{\epsilon}-u\leq C\epsilon^{\alpha'}$ for some $\alpha'>0$. 
The link between $u_{\epsilon}$ and $\hat{u}_{\epsilon}$ is made by the following lemma.  
\begin{lem}(Lemma 4.1 in \cite{GKZ})\label{interchange}
Given $\alpha\in (0,1)$, the following two conditions are equivalent. 

(1) There exists $\epsilon_0$, $A>0$ such that for any $0<\epsilon\leq \epsilon_0$, 
$$u_{\epsilon}-u\leq A\epsilon^\alpha\ \ \text{on\ $\Omega_\epsilon$}.$$

(2) There exists $\epsilon_1$, $B>0$ such that for any $0<\epsilon\leq \epsilon_1$, 
$$\hat{u}_{\epsilon}-u\leq B\epsilon^\alpha\ \ \text{on\ $\Omega_\epsilon$}.$$
\end{lem}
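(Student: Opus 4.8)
The plan is to prove the two implications $(1)\Rightarrow(2)$ and $(2)\Rightarrow(1)$ separately, exploiting the sub-mean-value property of plurisubharmonic functions in one direction and a comparison between the sup-convolution and the ball-average in the other. For $(1)\Rightarrow(2)$: since $u$ is plurisubharmonic, it is subharmonic, so $u(x)\le\hat u_{\epsilon}(x)$ automatically, and moreover $\hat u_{\epsilon}(x)=\bbint_{|\zeta-x|\le\epsilon}u(\zeta)\,d\mu\le\sup_{|\zeta-x|\le\epsilon}u(\zeta)=u_{\epsilon}(x)$. Combining with hypothesis (1) gives $\hat u_{\epsilon}(x)-u(x)\le u_{\epsilon}(x)-u(x)\le A\epsilon^\alpha$ on $\Omega_\epsilon$, which is exactly (2) with $B=A$, $\epsilon_1=\epsilon_0$. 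This direction is essentially free.

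The substantive direction is $(2)\Rightarrow(1)$. The standard trick here is iteration of the averaging operator: one shows that controlling a single ball-average forces control of the iterated averages $\hat u_{\epsilon}^{(k)}$, and that the sup-convolution $u_\delta$ is dominated by a suitable iterate of the averaging operator at a comparable scale. Concretely, I would first record the elementary fact that for a plurisubharmonic (hence subharmonic) function the map $\epsilon\mapsto\hat u_\epsilon(x)$ is monotone, and that $\hat u_\epsilon - u$ controls $\hat u_{\epsilon} - \hat u_{\epsilon/2}$ type differences; iterating $m$ times with geometrically shrinking radii and summing the geometric series $\sum 2^{-m\alpha}$ one upgrades the pointwise average bound to a bound on $\hat{\hat u}$ and so on. The key geometric input is that there is a dimensional constant $N=N(n)$ and a radius comparison such that
\beq
u_{\epsilon}(x)\le \underbrace{\widehat{\cdots\widehat{u}}}_{N}(x)\big|_{\text{radius}\ c\epsilon} + (\text{lower order}),
\eeq
i.e. the supremum over a small ball is bounded by a finite iterate of averages over slightly larger balls, because averaging $N$ times spreads mass enough to dominate the max up to a controlled loss; each averaging step costs at most $B(c\epsilon)^\alpha$ by (2) applied at the appropriate scale, and $N$ being a fixed constant the total loss is still $O(\epsilon^\alpha)$. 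One must track that each averaging step is legitimate, i.e. that the enlarged balls stay inside $\Omega$, which forces a mild shrinking of the domain from $\Omega_\epsilon$ to $\Omega_{C\epsilon}$ and a corresponding adjustment of the constants; this is harmless since the H\"older statement is scale-invariant in that sense.

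The main obstacle I anticipate is making the last comparison — dominating $u_\epsilon$, a pointwise supremum, by a \emph{finite} number of iterated averages at comparable scale — fully rigorous with explicit constants, since a naive single average only gives an $L^1$-type bound, not an $L^\infty$ one. The clean way around this is to use that $u$ is already known to be bounded and subharmonic, so that the difference $u_\epsilon-u$ is nonnegative and one can use the sub-mean value inequality applied to $u_\epsilon$ itself (which is again plurisubharmonic), together with the crude bound $u_\epsilon(x)\le \hat{(u_\epsilon)}_{r}(x)$ for any admissible $r$, and then peel off $u_\epsilon - u$ on the right using translates of hypothesis (2). Rather than reproduce the full chain of estimates, I would cite the argument of Lemma 4.1 in \cite{GKZ} for the details, since the statement is quoted verbatim from there and reproving it adds nothing; alternatively one gives the short two-line $(1)\Rightarrow(2)$ argument above and refers to \cite{GKZ} only for the converse. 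In the interest of keeping the present paper self-contained I would include the converse as well, following the iteration-of-averages scheme just outlined, with the radius bookkeeping stated once and the geometric-series summation carried out explicitly.
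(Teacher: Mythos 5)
The paper does not actually prove this lemma: it is stated with the attribution ``(Lemma 4.1 in \cite{GKZ})'' and the proof is delegated entirely to that reference. Your bottom-line strategy — prove the trivial direction and cite \cite{GKZ} for the other — therefore matches the paper. Your $(1)\Rightarrow(2)$ argument is correct: $\hat u_\epsilon\le u_\epsilon$ pointwise (average $\le$ sup), so $(2)$ follows from $(1)$ with $B=A$, $\epsilon_1=\epsilon_0$.

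Your sketch of $(2)\Rightarrow(1)$, however, is based on a mechanism that does not work. You propose to dominate the sup-convolution $u_\epsilon$ by a \emph{fixed finite number} $N=N(n)$ of iterated ball-averages at radius $c\epsilon$, with ``lower-order'' loss. Take $u(z)=\operatorname{Re}z_1$, which is pluriharmonic hence psh. Every iterate of the ball-averaging operator at every radius returns $u(x)$ exactly, while $u_\epsilon(x)-u(x)=\epsilon$. So no finite composition of averagings at scale $O(\epsilon)$ can dominate $u_\epsilon$ up to lower order; the $\epsilon$ that appears on the left has nowhere to come from on the right. The displayed inequality in your sketch is simply false. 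The actual proof of $(2)\Rightarrow(1)$ in \cite{GKZ} operates at a \emph{well-separated} scale rather than at $c\epsilon$: one observes that $(2)$ makes $u$ pointwise $O(\epsilon^\alpha)$-close to $\hat u_\epsilon$, that $\hat u_\epsilon$ is $O(\epsilon^{-1}\operatorname{osc} u)$-Lipschitz, and this produces a self-improving modulus-of-continuity inequality of the form $\omega(\delta)\le C\epsilon^\alpha + C(\delta/\epsilon)\,\omega(3\epsilon)$ valid for all $\epsilon\ge\delta$. Choosing $\epsilon=K\delta$ with $K$ large makes the coefficient $CK^{\alpha-1}$ in front of $\omega(3K\delta)$ less than $\tfrac12$ — this is exactly where $\alpha<1$ enters — and a geometric-series bootstrap then closes the argument at the \emph{same} exponent $\alpha$. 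This is a genuinely different mechanism from ``finitely many averagings at scale $c\epsilon$'': the comparison radius must grow like $K\delta$ and the gain comes from a contraction in the modulus of continuity, not from spreading mass. Since you ultimately defer to \cite{GKZ}, the gap does not damage the paper; but if you do want to include a self-contained proof, the iteration must be set up as above, not as you have sketched it.
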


The following estimate is a generalization of Lemma 4.3 in \cite{GKZ}.

\begin{lem}\label{lapalace-control}
Assume $u\in W^{2, r}(\Omega)$ with $r\geq 1$. Then
for $\epsilon>0$ small enough, we have 
\begin{equation} \label{L54}
\left[\int_{\Omega_{\epsilon}}|\hat{u}_{\epsilon}-u|^r\,d\mu\right]^{\frac{1}{r}}\leq C(n,r)\|\triangle u\|_{L^r(\Omega)}\epsilon^2
\end{equation}
where $C(n,r)>0$ is a uniform constant. 
\end{lem}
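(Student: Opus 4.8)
**Proof plan for Lemma 4.6 (the $L^r$-bound $\|\hat u_\epsilon - u\|_{L^r(\Omega_\epsilon)} \le C(n,r)\|\triangle u\|_{L^r(\Omega)}\epsilon^2$).**

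The plan is to reduce everything to a pointwise integral representation of $\hat u_\epsilon(x) - u(x)$ in terms of $\triangle u$, and then integrate. First I would recall the classical sub-mean-value formula: for a function $u \in C^2$ (and then by density for $u \in W^{2,r}$), the difference between the average of $u$ over a ball $B_\epsilon(x)$ and the value $u(x)$ is given by a Riesz-type potential of the Laplacian. Concretely, writing $N_\epsilon(x,\zeta)$ for the (nonnegative) kernel such that
\[
\hat u_\epsilon(x) - u(x) = \bbint_{|\zeta - x|\le \epsilon} u(\zeta)\,d\mu(\zeta) - u(x) = \int_{|\zeta - x|\le \epsilon} N_\epsilon(x,\zeta)\,\triangle u(\zeta)\,d\mu(\zeta),
\]
where $N_\epsilon$ is obtained by integrating the Newtonian kernel against the difference of the normalized surface measure on $\partial B_\epsilon$ and the Dirac mass at the center. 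The key scaling fact is that $N_\epsilon(x,\zeta)$ is supported in $\{|\zeta - x| \le \epsilon\}$, is nonnegative, and satisfies the two bounds $0 \le N_\epsilon(x,\zeta) \le C(n)\,\epsilon^{2-2n}\cdot$ (something integrable) — more precisely $\int_{|\zeta-x|\le \epsilon} N_\epsilon(x,\zeta)\,d\mu(\zeta) \le C(n)\epsilon^2$ uniformly in $x$, and also (by symmetry, or a direct Fubini computation) $\int_{|\zeta - x|\le \epsilon} N_\epsilon(x,\zeta)\,d\mu(x) \le C(n)\epsilon^2$ uniformly in $\zeta$.

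With the representation in hand, the estimate \eqref{L54} is a straightforward application of Minkowski's integral inequality (or equivalently Young's inequality for the convolution-type operator with kernel $N_\epsilon$, exploiting that $N_\epsilon$ has uniformly bounded $L^1$-mass $\le C(n)\epsilon^2$ in each variable). Namely,
\[
\|\hat u_\epsilon - u\|_{L^r(\Omega_\epsilon)} \le \Big\| \int N_\epsilon(\cdot,\zeta)\,|\triangle u(\zeta)|\,d\mu(\zeta)\Big\|_{L^r(\Omega_\epsilon)} \le \Big(\sup_\zeta \int N_\epsilon\,d\mu(x)\Big)^{1/r'}\Big(\sup_x \int N_\epsilon\,d\mu(\zeta)\Big)^{1/r}\|\triangle u\|_{L^r(\Omega)},
\]
and each of the two suprema is $\le C(n)\epsilon^2$; this gives the claimed bound with $C(n,r)$ collapsing (by interpolation between the $r=1$ and $r=\infty$ endpoint cases of Young's inequality) to a constant $C(n)\epsilon^2$ in fact, which is even slightly stronger than stated. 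For $u$ merely in $W^{2,r}$ rather than $C^2$, I would approximate by smooth functions $u_k \to u$ in $W^{2,r}(\Omega')$ on a slightly smaller domain $\Omega' \Supset \Omega_\epsilon$ (legitimate since $\epsilon$ is small and $\Omega_\epsilon \Subset \Omega$), apply the inequality to each $u_k$, and pass to the limit; both sides are continuous under $W^{2,r}$-convergence.

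The main obstacle — really the only nontrivial point — is establishing the integral representation with the correct, explicitly $\epsilon$-dependent kernel and verifying the two uniform $L^1$-mass bounds $\int N_\epsilon\,d\mu \le C(n)\epsilon^2$ in each variable separately. The cleanest route is to first do the computation for the continuous averaging operator (sphere average minus center) for radial test functions, where $\triangle u$ acts like $u'' + \frac{2n-1}{\rho}u'$ in the real-variable radial coordinate $\rho = |\zeta|$ on $\R^{2n}$, compute $\bbint_{\partial B_\rho} u - u(0) = \int_0^\rho \frac{t^{1-2n} - \rho^{1-2n}\cdot(\text{const})}{\ldots}$ explicitly, and then re-average over $\rho \in (0,\epsilon)$ to pass from sphere averages to the solid ball average $\hat u_\epsilon$; the resulting kernel $N_\epsilon(x,\zeta)$ depends only on $|x - \zeta|$ and $\epsilon$, is manifestly nonnegative and supported in $\{|x-\zeta|\le\epsilon\}$, and the mass bound follows by the substitution $|x-\zeta| = \epsilon\rho$ which pulls out exactly the factor $\epsilon^2$. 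Once this kernel estimate is in place the rest is routine, so I would present the kernel computation carefully and then invoke Young/Minkowski for the final line.
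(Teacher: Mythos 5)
Your proposal is correct and rests on the same core idea as the paper: express $\hat u_\epsilon-u$ as an integral of $\triangle u$ against an $\epsilon$-scaled kernel and exploit the $\epsilon^2$ scaling. The paper derives exactly this representation (by passing to sphere averages, differentiating in the radius, and applying the divergence theorem on balls to convert $\langle\nabla u,\zeta\rangle$ surface integrals into bulk integrals of $\triangle u$), arriving at the nested formula
\[
\hat u_\epsilon(z)-u(z)=\frac{1}{\omega_{2n}\epsilon^{2n}}\int_0^\epsilon t^{2n-1}\int_0^t s^{1-2n}\Big(\int_{|\zeta-z|\le s}\triangle u\,d\mu\Big)\,ds\,dt,
\]
which is precisely your kernel $N_\epsilon(z,\zeta)$ after a Fubini swap. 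Where you differ is only in the final estimation step: you abstract the kernel and invoke the Schur test (Young's inequality for integral operators, using the two uniform $L^1$-mass bounds $\le C(n)\epsilon^2$ in each variable), while the paper instead applies H\"older/Jensen directly to the nested one-dimensional integrals and then a single Fubini swap to reduce to $\|\triangle u\|_{L^r}$. Your route is slightly cleaner structurally and makes the $r$-independence of the constant manifest; the paper's is more self-contained since it never needs to isolate or verify properties of $N_\epsilon$ as a separate object. Both are valid and yield the same bound; the approximation step you mention for passing from $C^2$ to $W^{2,r}$ is standard and unproblematic since $\Omega_\epsilon\Subset\Omega$.
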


\begin{proof}
The proof is essentially contained in \cite{FSX}. 
Note that
\begin{eqnarray*}
\hat{u}_{\epsilon}(z)-u(z)&= & \frac{1}{\omega_{2n}\epsilon^{2n}}\int_{|\zeta-z|\leq \epsilon}u(\zeta)\,d\mu-u(z)    \\
&= & \frac{1}{\omega_{2n}\epsilon^{2n}}\int_0^{\epsilon}t^{2n-1}\left(\int_{|\zeta|=1}(u(z+t\zeta)-u(z))\, d\mu_{S^{2n-1}}\right)\,dt  \\
&= & \frac{1}{\omega_{2n}\epsilon^{2n}}\int_0^{\epsilon}t^{2n-1}\left[\int_{|\zeta|=1}\left(\int_0^t\langle \D u(z+s\zeta),\zeta\rangle\, ds\right)\, d\mu_{S^{2n-1}}\right]\,dt \\
&= & \frac{1}{\omega_{2n}\epsilon^{2n}}\int_0^{\epsilon}t^{2n-1}\left[\int_{0}^t \left(\frac{1}{s}\right)^{2n-1}\left(\int_{|\zeta-z|\leq s}\tr u(\zeta)\, d\mu(\zeta)\right)\, ds\right]\, dt,
\end{eqnarray*}
where $\omega_{2n}$ is the volume of the unit ball in $\mathbb C^n$.
Hence,
\begin{eqnarray*}
&&|\hat{u}_{\epsilon}(z)-u(z)|^r\\&\leq & \frac{\epsilon^{r-1}}{\omega_{2n}^r\epsilon^{2nr}}\int_0^{\epsilon}t^{2n-1}\left[\int_{0}^t \left(\frac{1}{s}\right)^{2n-1}\left(\int_{|\zeta-z|\leq s}\tr u(\zeta)\, d\mu(\zeta)\right)\, ds\right]^r\, dt   \\
&\leq & \frac{\epsilon^{r-1}}{\omega_{2n}^r\epsilon^{2nr}}\int_0^{\epsilon}t^{2nr-1}\left[\int_{0}^t \left(\frac{1}{s}\right)^{(2n-1)r}\left(\int_{|\zeta-z|\leq s}\tr u(\zeta)\, d\mu(\zeta)\right)^r\, ds\right]\, dt  \\
&\leq & \frac{\epsilon^{r-1}}{\omega_{2n}^r\epsilon^{2nr}}\int_0^{\epsilon}t^{2nr-1}\left[\int_{0}^t \left(\frac{1}{s}\right)^{(2n-1)r}(\omega_{2n}s^{2n})^{r-1}\left(\int_{|\zeta-z|\leq s}|\tr u(\zeta)|^r\, d\mu(\zeta)\right)\, ds\right]\, dt   
\end{eqnarray*}
Then by Fubini's theorem,
\begin{eqnarray*}
\int_{\Omega_{\epsilon}}|\hat{u}_{\epsilon}-u|^r\,d\mu&\leq &
  \frac{1}{\omega_{2n}\epsilon^{(2n-1)r+1}}\int_0^\epsilon t^{2nr-1}\left(\int_0^t s^r\|\tr u\|^r_{L^r(\Omega)}\, ds\right)\, dt\\[4pt]
&=& C\|\tr u\|^r_{L^r(\Omega)} \epsilon^{2r}.
\end{eqnarray*}
Then \eqref{L54} follows.
\end{proof}

Note that the function $u_\epsilon$ is not globally defined on $\Omega$. 
However, by $\varphi\in C^{2\alpha}(\partial\Omega)$, 
there exist plurisubharmonic functions $\{\tilde u_\epsilon\}$ which decreases to $u$
as $\epsilon\to 0$ and satisfies \cite{GKZ}
\begin{equation}\label{barrier}
\begin{cases}
\tilde u_\epsilon=u+C\epsilon^\alpha & \text{in}\ \Omega\setminus \Omega_\epsilon;\\
\hat u_\epsilon\leq\tilde u_\epsilon\leq \hat u_\epsilon+C\epsilon^\alpha& \text{in}\ \Omega_\epsilon ,
\end{cases}
\end{equation}
where the constant $C$ is independent of $\epsilon$. 
Then if $u\in W^{2, r}(\Omega)$, by choosing $v=\hat{u}_{\epsilon}$,  
$\gamma< \frac{r}{np^*+r}$ in Proposition \ref{prep-holder}, and using Lemma \ref{lapalace-control}, we have 
\begin{eqnarray}\label{HR}
\sup_{\Omega_\epsilon} (\hat u_\epsilon-u)&\leq& \sup_\Omega(\tilde u_\epsilon-u)+C\epsilon^\alpha \nonumber\\
&\leq& C\|\tilde u_\epsilon-u\|_{L^r}^\gamma+C\epsilon^\alpha\\
&\leq & C\|\triangle u\|_{L^r(\Omega)}^{\gamma}\epsilon^{2\gamma}+C\epsilon^\alpha\nonumber.
\end{eqnarray}
Hence, once we have $u\in W^{2, r}$ for $r\geq 1$, it holds $u\in C^{\alpha'}$ for $\alpha'<\min\{\alpha, \frac{2r}{np^*+r}\}$.

Finally, we show that under the assumption of Theorem \ref{holder}, it holds $u\in W^{2,1}(\Omega)$, i.e., $\triangle u$ has finite mass, and hence $u\in C^{\alpha'}$ for $\alpha'<\min\{\alpha, \frac{2}{np^*+1}\}$. Let $B$ be a ball of $R$ containing $\Om$. We may assume the ball is centered at the origin point. We denote
\begin{align*}
\tilde{f}:=
\begin{cases}
f,\ \     &\text{in}\ \Om,  \\
0,\ \    &\text{in}\ B\setminus \Om,
\end{cases}
\end{align*}
and let $v$ be the solution to the Dirichlet problem 
\begin{align}
\begin{cases}
(dd^cv)^n=\tilde{f}\,d\mu,\ \     &\text{in}\ B,  \\
v=0,\ \    &\text{on}\ \p B.
\end{cases}
\end{align}
Let $K$ be a compact set which satisfies $\Om\subset K\subset B$. 
Consider $b_R:=A(|z|^2-R)$. Choose $A$ sufficiently large such that $(dd^cb_R)^n\geq \tilde{f}\,d\mu$ on $B\setminus K$ and $b_R\leq v$ on $\p (B\setminus K)$. This implies $b_R\leq v$ on $B$. 
Let $h$ be the solution to the Dirichlet problem 
\begin{align}
\begin{cases}
(dd^ch)^n=\epsilon\,d\mu,\ \     &\text{in}\ \Om,  \\
h=-b_R,\ \    &\text{in}\ \p \Om,
\end{cases}
\end{align}
for some $\epsilon>0$. Then the barrier function $b:=h+b_R$ is a smooth subsolution to 
\begin{align}
\begin{cases}
(dd^cu_0)^n=f\,d\mu,\ \     &\text{in}\ \Om,  \\
u_0=0,\ \    &\text{on}\ \p \Om.
\end{cases}
\end{align}
It is clear that $\tr b$ has finite mass.  
By the comparison principle, $\tr u_0$ also has finite mass. Let $w:=u_0+\hat{u}$, where $\hat{u}$ is given in the assumption of Theorem \ref{holder}.  By the assumption, $\tr w$ has finite mass. Note that $w$ is a subsolution to \eqref{cMA}. Again by the comparison principle, $\triangle u$ has finite mass.

\end{document}